\newtheorem{theorem}{Theorem}[section]
\newtheorem{lemma}[theorem]{Lemma}
\newtheorem{corollary}[theorem]{Corollary}
\theoremstyle{definition}
\theoremstyle{remark}
\newtheorem{remark}[theorem]{Remark}
\numberwithin{equation}{section}
\begin{document}

\title [   Improvements of   operator   inequality  ]{  Improvements of   some   operator   inequalities   involving    positive  linear  maps
via   the Kantorovich   constant
}

\author[L.  Nasiri, M. Bakherad]{Leila   Nasiri$^1$ and Mojtaba Bakherad$^2$}

\address{ $^1$ Department of Mathematics  and   computer  science, Faculty  of  science,  Lorestan   University,   Khorramabad,  Iran.}
\email{leilanasiri468@gmail.com}

\address{$^2$ Department of Mathematics, Faculty of Mathematics, University of Sistan and Baluchestan, Zahedan, Iran.}
\email{mojtaba.bakherad@yahoo.com; bakherad@member.ams.org}

\subjclass[2010]{Primary 47A63, Secondary  47B20.}
\keywords{Operator  mean,   Ando's inequality,
     Kantorovich's  constant,   Positive linear map. }
\begin{abstract}
We    present  some  operator  inequalities   for  positive  linear  maps
that    generalize   and  improve   the  derived   results    in  some   recent  years.
For instant,   if  $A$ and $B$ are positive operators  and  $m,m^{'},M,M^{'}$ are positive  real  numbers
satisfying    either one of the  condition  $ 0<m \leq B \leq m^{'} <M^{'} \leq A \leq M $ or $0<m \leq A \leq m^{'} <M^{'} \leq B \leq M$,
then
     \begin{align*}
 \Phi  ^{p} \big(A \nabla _{v} B+2 r Mm (A^{-1}\nabla  B^{-1}-
&A^{-1} \sharp B^{-1}
)\big)\\
& \leq    \left( \frac{K(h)}{ 4^{\frac{2}{p}-1} K^{r_{1}} \left( \sqrt {h^{'}}\right)} \right) ^{p} \Phi^{p} (A \sharp_{\nu} B)
\end{align*}
and
\begin{align*}
 \Phi  ^{p} \big(A \nabla _{v} B+2 r Mm (A^{-1}\nabla  B^{-1}-&
A^{-1} \sharp B^{-1}
)\big) \\
&\leq    \left( \frac{K(h)}{ 4^{\frac{2}{p}-1} K^{r_{1}}\left( \sqrt {h^{'}}\right)}\right) ^{p} (\Phi(A) \sharp_{\nu}  \Phi (B))^{p},
\end{align*}
where   $\Phi$ is a  positive  unital  linear map,    $ 0 \leq \nu \leq 1$,  $p \geq  2,$ $r=\min\{\nu,1-\nu\},$   $h=\frac{M}{m},$  $h^{'}=\frac{M^{'}}{m^{'}}$,
$K(h)=\frac{(1+h)^{2}}{4h}$  and $r_{1}=\min\{2r,1-2r\}.$
 We also  obtain   a reverse  of  the  Ando   inequality
for  positive  linear  maps   via  the   Kantorovich   constant.
\end{abstract} \maketitle
\section{Introduction and preliminaries}
Let ${\mathbb B}({\mathscr H})$ denote the $C^*$-algebra of all bounded linear operators on a complex Hilbert space ${\mathscr H}$   whose    identity   is  denoted   by $I$. An operator $A\in{\mathbb B}({\mathscr H})$ is called positive if $\langle Ax,x\rangle\geq0$ for all $x\in{\mathscr H }$  and in this case we write $A\geq0$. We write $A>0$ if $A$ is a positive invertible operator. The   absolute value   of  $A$   is  denoted  by  $|A|,$   that   is  $|A|=(A^*A)^{\frac{1}{2}}.$ For self-adjoint operators $A, B\in{\mathbb B}({\mathscr H})$, we say $A\leq B$ if $B-A\geq0$. The Gelfand map $f(t)\mapsto f(A)$ is an isometrical $*$-isomorphism between the $C^*$-algebra
$C({\rm sp}(A))$ of continuous functions on the spectrum ${\rm sp}(A)$ of a self-adjoint operator $A$ and the $C^*$-algebra generated by $A$ and  $I$. If $f, g\in C({\rm sp}(A))$, then $f(t)\geq g(t)\,\,(t\in{\rm sp}(A))$ implies that $f(A)\geq g(A)$.
A   linear  map  $\Phi$  is  positive  if  $\Phi (A) \geq 0$
whenever   $A \geq 0.$  It  is  said  to  be   unital  if  $\Phi (I)=I.  $
   If  $A,B \in {\mathbb B}({\mathscr H})$ be  positive invertible,
  then  the    $\nu-$weighted   arithmetic  mean   and  geometric   mean  of  $A$  and  $B$
denoted   by  $A\nabla_{\nu} B$   and    $A\sharp_{\nu}  B,$  respectively, which are defined
  by
  \begin{align*}
A\nabla_{\nu}B=\nu A+(1-\nu)B
\quad\textrm{and}\quad
 A\sharp_\nu B=A^{\frac{1}{2}}\left(A^{-\frac{1}{2}}BA^{-\frac{1}{2}}\right)^{\nu}A^{\frac{1}{2}},
\end{align*}
respectively, where  $0 \leq \nu \leq 1. $
In case  of  $\nu=\frac{1}{2},$  we  write  $A\nabla B$   and   the
 $A\sharp B$   for    the   arithmetic  mean   and   the   geometric  mean, respectively.
 The   well-known  $\nu-$weighted   arithmetic-geometric (AM-GM)  operator  inequality
   says   that  if
 $A,B\in{\mathbb B}({\mathscr H})$   are  positive   and  $0\leq \nu \leq 1,$  then
 $A \sharp_{\nu} B  \leq    A \nabla _{\nu} B$;   see
  \cite{TFurutaJMicicHotJPecaricYSeo}.
 For $\nu=\frac{1}{2},$ we  obtain  the  AM-GM    operator  inequality
 \begin{equation}\label{12}
 A \sharp B  \leq   \frac {A + B}{2}.
 \end{equation}
For further information about the  AM-GM  operator  inequality and positive linear maps inequalities we refer the reader to
\cite{TAndo, MB, MB1, MFujiiRNakamoto,MSMoslehianRNakamotoYSeo} and references therein.   Lin   \cite{MLin}  presented  a  reverse  of   inequality  $\eqref{12}$   for  a positive  linear  map  $\Phi$  and   positive  operators   $A,B \in {\mathbb B}({\mathscr H})$
such  that
$m \leq A,B \leq M$
as  follows:
\begin{equation}\label{13}
\Phi  \left(\frac {A + B}{2} \right)\leq K(h) \Phi(A \sharp  B),
 \end{equation}
where  $K(h)=\frac{(1+h)^{2}}{4h}$  and  $h=\frac{M}{m}.$
The  constant   $K(t)=\frac{(t+1)^{2}}{4t}(t>0)$ is
called   the  Kantorovich  constant    which   satisfies  the  following  properties:\\
$($i$)$  $K(1,2)=1;$\\
$($ii$)$  $K(t,2)=K(\frac{1}{t},2) \geq 1 \,\,(t>0);$\\
$($iii$)$  $K(t,2)$   is  monotone  increasing on   the   interval  $[1,\infty)$
and    monotone   decreasing on   the   interval   $(0,1]$.

The  Lowner-Heinz    theorem   \cite{FKuboTAndo}   says   that   if  $ A, B\in {\mathbb B}({\mathscr H})$    are    positive,  then  for  $0 \leq p \leq 1,$
\begin{equation}\label{14}
A \leq B   \quad\;\;  {\textrm{ implies}}  \quad   A^{p} \leq B^{p}.
\end{equation}
In general \eqref{14} is not true for $p > 1$.
In   \cite{MLin}, the author  showed   that  inequality  $\eqref{13}$
 can  be  squared   that  is,
\begin{equation}\label{15}
\Phi^{2} \left(\frac {A+B}{2} \right) \leq K^{2}(h)  \Phi^{2} (A\sharp B)
\end{equation}
and
\begin{equation}\label{16}
\Phi^{2} \left(\frac {A+B}{2} \right) \leq K^{2}(h)  (\Phi(A) \sharp \Phi( B))^{2}.
\end{equation}
It  follows  $(\ref{14}),$  $(\ref{15})$  and  $(\ref{16})$    that  for  $0< p \leq 2$ we have
\begin{equation}\label{17}
\Phi^{p} \left(\frac {A+B}{2} \right) \leq K^{p}(h)  \Phi^{p} (A\sharp B)
\end{equation}
and
\begin{equation}\label{18}
\Phi^{p} \left(\frac {A+B}{2} \right) \leq K^{p}(h)  (\Phi(A) \sharp \Phi( B))^{p}.
\end{equation}
It  is   natural  to  ask  whether   inequalities   (\ref{17}) and  (\ref{18})  are  true  for  $p>2.$
In  \cite{XFuCHe},    the  authors    gave   a   positive    answer     to  this   question  and  proved   the  following  theorem:
\begin{theorem}\label{hoho}
Let  $0< m \leq A,B \leq M.$  Then  for  every  positive  unital   linear  map $\Phi$   and  for every $p \geq 2$
\begin{align}\label{19}
\Phi^{p} \left(\frac {A+B}{2} \right)  & \leq  \left( \frac{(M+m)^{2}}{4^{\frac{2}{p}}Mm} \right)^{p} \Phi^{p} (A\sharp B)
\end{align}
and
\begin{align}\label{110}
 \Phi^{p} \left(\frac {A+B}{2} \right)  & \leq \left( \frac{(M+m)^{2}}{4^{\frac{2}{p}}Mm} \right)^{p} (\Phi(A) \sharp \Phi( B))^{p}.
\end{align}
\end{theorem}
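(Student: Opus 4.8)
The plan is to deduce \eqref{19} from Lin's already–squared inequality \eqref{15} by reducing it to a scalar (operator‑norm) estimate. Set $C=\Phi\bigl(\tfrac{A+B}{2}\bigr)$ and $D=\Phi(A\sharp B)$. Since $0<m\le A,B\le M$ forces $mI\le\tfrac{A+B}{2}\le MI$ and, by monotonicity of the operator geometric mean, $mI\le A\sharp B\le MI$, and since $\Phi$ is positive and unital, both $C$ and $D$ are positive invertible with $mI\le C,D\le MI$. Writing $\alpha=\frac{(M+m)^{2}}{4^{2/p}Mm}$, the desired inequality $C^{p}\le\alpha^{p}D^{p}$ is — after multiplying on the left and right by $D^{-p/2}$ and using $D^{-p/2}C^{p}D^{-p/2}=\bigl(C^{p/2}D^{-p/2}\bigr)^{*}\bigl(C^{p/2}D^{-p/2}\bigr)$ — equivalent to the single scalar inequality
\[
\left\|C^{p/2}D^{-p/2}\right\|\ \le\ \alpha^{p/2}\ =\ \frac{(M+m)^{p}}{4\,(Mm)^{p/2}}.
\]

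To bound the left–hand side I would use the factorisation (legitimate because $p\ge 2$)
\[
C^{p/2}D^{-p/2}\ =\ C^{\,p/2-1}\bigl(CD^{-1}\bigr)D^{\,1-p/2},
\]
which isolates the factor $CD^{-1}$. Indeed, Lin's \eqref{15} reads $C^{2}\le K^{2}(h)D^{2}$, which is exactly $\|CD^{-1}\|\le K(h)=\frac{(M+m)^{2}}{4Mm}$. The remaining two factors are controlled crudely by the spectral bounds: $\|C^{\,p/2-1}\|=\|C\|^{\,p/2-1}\le M^{\,p/2-1}$ and $\|D^{\,1-p/2}\|=\|D^{-1}\|^{\,p/2-1}\le m^{\,1-p/2}$. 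Hence
\[
\left\|C^{p/2}D^{-p/2}\right\|\ \le\ M^{\,p/2-1}\,K(h)\,m^{\,1-p/2}\ =\ K(h)\,h^{\,p/2-1}.
\]

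It then remains to check $K(h)\,h^{\,p/2-1}\le\frac{(M+m)^{p}}{4(Mm)^{p/2}}$; substituting $K(h)=\frac{(M+m)^{2}}{4Mm}$ and $h=\frac{M}{m}$, clearing denominators and using $hMm=M^{2}$, this collapses to $M^{\,p-2}\le(M+m)^{\,p-2}$, which holds for every $p\ge 2$. This establishes \eqref{19}. The proof of \eqref{110} is identical with $D$ replaced by $D'=\Phi(A)\sharp\Phi(B)$: one still has $mI\le D'\le MI$ (monotonicity of $\sharp$ applied to $mI\le\Phi(A),\Phi(B)\le MI$), and \eqref{16} gives $\|C(D')^{-1}\|\le K(h)$, after which the same three–term split and the same final scalar verification apply verbatim.

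The main obstacle — the one genuinely new point beyond the $p=2$ case — is the three–term factorisation $C^{p/2}D^{-p/2}=C^{p/2-1}(CD^{-1})D^{1-p/2}$: recognising that no new operator inequality is needed for $p>2$, and that it suffices to peel off the ``excess'' powers $C^{p/2-1}$ and $D^{1-p/2}$, bound them trivially by $M^{p/2-1}$ and $m^{1-p/2}$, and reuse Lin's $p=2$ bound on the middle factor. After that, the bookkeeping that makes the constant come out as precisely $4^{2/p}$ (namely the elementary inequality $M^{p-2}\le(M+m)^{p-2}$) is routine.
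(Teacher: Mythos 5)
Your proof is correct, but it takes a genuinely different route from the one behind Theorem \ref{hoho} (which is quoted here from \cite{XFuCHe}; its technique is the one this paper reuses to prove Theorem \ref{nasi-main123}). The standard argument reduces, via Lemma \ref{6}(iii), to showing $(Mm)^{p/2}\bigl\|C^{p/2}D^{-p/2}\bigr\|\le\frac{(M+m)^{p}}{4}$ with $C=\Phi\bigl(\frac{A+B}{2}\bigr)$, $D=\Phi(A\sharp B)$, and then runs the chain $\|XY\|\le\frac{1}{4}\|X+Y\|^{2}$, $\|X^{\alpha}+Y^{\alpha}\|\le\|(X+Y)^{\alpha}\|$, Choi's inequality \eqref{23}, and the operator bound $T+MmT^{-1}\le M+m$ on $[m,M]$; the $4^{2/p}$ in the constant is exactly the residue of the Bhatia--Kittaneh factor $\frac{1}{4}$. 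You instead take the already-squared $p=2$ inequalities \eqref{15} and \eqref{16} as a black box (each equivalent, again by Lemma \ref{6}(iii), to $\|CD^{-1}\|\le K(h)$), peel off the excess powers through $C^{p/2}D^{-p/2}=C^{p/2-1}\bigl(CD^{-1}\bigr)D^{1-p/2}$, and control the outer factors by the spectral estimates $\|C\|\le M$, $\|D^{-1}\|\le m^{-1}$ --- which do require, and you correctly supply, the localizations $mI\le A\sharp B\le MI$ and $mI\le\Phi(A)\sharp\Phi(B)\le MI$. Every step checks: the final scalar verification $K(h)h^{p/2-1}\le\frac{(M+m)^{p}}{4(Mm)^{p/2}}$ does collapse to $M^{p-2}\le(M+m)^{p-2}$. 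What your route buys is economy and, incidentally, a sharper conclusion: your intermediate bound gives the constant $K(h)^{2}h^{p-2}$ in place of $4^{p-2}K(h)^{p}$, and the ratio of the two is $\bigl(\tfrac{M}{M+m}\bigr)^{2p-4}\le1$, so for $p>2$ you prove strictly more than \eqref{19} and \eqref{110}. What it costs is robustness: the peeling step leans on having the same two-sided spectral window $[m,M]$ for both the numerator and denominator operators, so it does not transfer directly to the perturbed and weighted versions (such as Theorem \ref{nasi-main123}, where the argument of $\Phi$ on the left is only confined to a larger interval), which is presumably why the literature sticks with the Bhatia--Kittaneh route.
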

The next   result  is  a     further  generalization  \cite{MB}:

\begin{theorem}\cite{MB}\label{Bha}
Let  $0< m \leq A,B \leq M.$  Then  for  every  positive  unital   linear  map $\Phi,$  $0 \leq \nu \leq 1$  and  for every $p>0$
\begin{align*}
\Phi^{p} \left( A\nabla _{\nu} B +2r Mm \left( A^{-1} \nabla B^{-1}-A^{-1} \sharp B^{-1} \right)  \right)  & \leq
\alpha^{p} \Phi^{p} (A\sharp_{\nu} B)  \\
\Phi^{p} \left( A\nabla_{\nu} B +2r Mm \left( A^{-1} \nabla B^{-1}-A^{-1} \sharp B^{-1} \right)  \right)   & \leq
\alpha^{p} (\Phi(A) \sharp_{\nu}  \Phi( B))^{p},
\end{align*}
where  $r=\min\{ \nu, 1-\nu\}$  and $\alpha=\max \left\{  \frac{(M+m)^{2}}{4 Mm},  \frac{(M+m)^{2}}{4^{\frac{2}{p}}Mm}   \right\}.$

\end{theorem}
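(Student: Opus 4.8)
The plan is to collapse both displayed families to a single operator estimate and then push it to arbitrary $p$ by the powering technique underlying Theorem \ref{hoho}. Write $X=A\nabla_{\nu}B+2rMm\bigl(A^{-1}\nabla B^{-1}-A^{-1}\sharp B^{-1}\bigr)$, and let $C$ denote either $\Phi(A\sharp_{\nu}B)$ or $\Phi(A)\sharp_{\nu}\Phi(B)$. First I would record the elementary bounds $mI\le A\sharp_{\nu}B\le MI$: indeed $A\sharp_{\nu}B\le A\nabla_{\nu}B\le MI$, while taking inverses in $(A\sharp_{\nu}B)^{-1}=A^{-1}\sharp_{\nu}B^{-1}\le A^{-1}\nabla_{\nu}B^{-1}\le m^{-1}I$ gives the lower bound; since $\Phi$ is unital and positive, the same bounds $mI\le C\le MI$ hold for both choices of $C$, and in particular $C$ is invertible.

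The core of the proof will be the operator inequality
\begin{equation*}
X+Mm\bigl(A^{-1}\sharp_{\nu}B^{-1}\bigr)\le (M+m)I.\tag{$\ast$}
\end{equation*}
To obtain $(\ast)$ I would invoke the refined weighted arithmetic--geometric mean inequality of Kittaneh--Manasrah type, $P\nabla_{\nu}Q-P\sharp_{\nu}Q\ge 2r\,(P\nabla Q-P\sharp Q)$, applied to $P=A^{-1}$ and $Q=B^{-1}$; multiplying by $Mm$ this reads $2rMm\bigl(A^{-1}\nabla B^{-1}-A^{-1}\sharp B^{-1}\bigr)\le Mm\bigl(A^{-1}\nabla_{\nu}B^{-1}-A^{-1}\sharp_{\nu}B^{-1}\bigr)$, so the left side of $(\ast)$ is dominated by $A\nabla_{\nu}B+Mm\,A^{-1}\nabla_{\nu}B^{-1}=\nu\bigl(A+MmA^{-1}\bigr)+(1-\nu)\bigl(B+MmB^{-1}\bigr)$; since $t+Mm/t\le M+m$ on $[m,M]$, the functional calculus gives $A+MmA^{-1}\le (M+m)I$ and $B+MmB^{-1}\le (M+m)I$, and $(\ast)$ follows. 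I expect this telescoping to be the only genuine idea in the argument: the coefficient $2rMm$ carried by $A^{-1}\nabla B^{-1}-A^{-1}\sharp B^{-1}$ is exactly what is needed so that, once the term $Mm(A^{-1}\sharp_{\nu}B^{-1})$ is restored, the geometric-mean contributions are absorbed by the Kittaneh--Manasrah refinement and one is left with a scalar estimate on each variable separately.

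Next I would apply $\Phi$ to $(\ast)$. Using linearity, unitality and positivity of $\Phi$, the inequality $\Phi(C_{0})^{-1}\le\Phi(C_{0}^{-1})$ valid for every unital positive linear map (with $C_{0}=A\sharp_{\nu}B$ and $C_{0}^{-1}=A^{-1}\sharp_{\nu}B^{-1}$), and, for the second family, also Ando's inequality $\Phi(A\sharp_{\nu}B)\le\Phi(A)\sharp_{\nu}\Phi(B)$, which after inversion yields $\bigl(\Phi(A)\sharp_{\nu}\Phi(B)\bigr)^{-1}\le\Phi(A\sharp_{\nu}B)^{-1}\le\Phi\bigl(A^{-1}\sharp_{\nu}B^{-1}\bigr)$, I would arrive in both cases at
\begin{equation*}
\Phi(X)+Mm\,C^{-1}\le (M+m)I .
\end{equation*}

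The last step is the powering. Fix $p\ge 2$ and set $S=\Phi(X)\ge0$, $T=Mm\,C^{-1}\ge0$; then $S+T\le(M+m)I$, so $0\le S,T\le(M+m)I$. Because $t^{p/2}\le(M+m)^{p/2-1}t$ for $t\in[0,M+m]$ (here $p/2\ge1$), the functional calculus gives $S^{p/2}+T^{p/2}\le(M+m)^{p/2-1}(S+T)\le(M+m)^{p/2}I$, and then the well-known estimate $\|UV\|\le\tfrac14\|U+V\|^{2}$ for positive operators $U,V$ yields $\|S^{p/2}T^{p/2}\|\le\tfrac14(M+m)^{p}$. Since $T^{p/2}=(Mm)^{p/2}C^{-p/2}$, this says $\|\Phi(X)^{p/2}C^{-p/2}\|\le\dfrac{(M+m)^{p}}{4(Mm)^{p/2}}=\Bigl(\dfrac{(M+m)^{2}}{4^{2/p}Mm}\Bigr)^{p/2}$, and squaring and conjugating by $C^{-p/2}$ turns this into $\Phi^{p}(X)\le\Bigl(\dfrac{(M+m)^{2}}{4^{2/p}Mm}\Bigr)^{p}C^{p}$, which is the claim for $p\ge2$ (where $\alpha=\tfrac{(M+m)^{2}}{4^{2/p}Mm}$). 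For $0<p\le2$ I would specialize this to $p=2$, obtaining $\Phi^{2}(X)\le K^{2}(h)\,C^{2}$ with $K(h)=\tfrac{(M+m)^{2}}{4Mm}$, and then apply the L\"owner--Heinz inequality \eqref{14} with exponent $p/2\in(0,1]$ to deduce $\Phi^{p}(X)\le K^{p}(h)\,C^{p}$, which is the claim for $p\le2$ (where $\alpha=K(h)$). Taking $C=\Phi(A\sharp_{\nu}B)$ and $C=\Phi(A)\sharp_{\nu}\Phi(B)$ then gives, respectively, the first and the second displayed inequality with $\alpha=\max\bigl\{K(h),\,\tfrac{(M+m)^{2}}{4^{2/p}Mm}\bigr\}$. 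Apart from the telescoping in $(\ast)$, the only point needing care is extracting the sharp constant $4^{2/p}$ in the range $p\ge2$, which is exactly what the bound $t^{p/2}\le(M+m)^{p/2-1}t$ and the factor $\tfrac14$ in $\|UV\|\le\tfrac14\|U+V\|^{2}$ deliver.
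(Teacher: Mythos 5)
Your proof is correct. The paper states Theorem \ref{Bha} as a citation from the literature without proving it, but your argument is essentially the same machinery the paper deploys for its own generalizations (Theorems \ref{nasi-main1} and \ref{nasi-main123}): your telescoping step $(\ast)$ via the Kittaneh--Manasrah refinement is exactly the degenerate case $K^{r_{1}}(\sqrt{h^{'}})=1$ of Lemma \ref{233}, and the subsequent use of Choi's and Ando's inequalities, the reduction $A\nabla_{\nu}B+Mm\,A^{-1}\nabla_{\nu}B^{-1}\leq (M+m)I$, the norm estimate $\|UV\|\leq\tfrac14\|U+V\|^{2}$, and the L\"owner--Heinz step for $0<p\leq 2$ mirror the paper's proofs line for line (your scalar bound $t^{p/2}\leq (M+m)^{p/2-1}t$ is a clean substitute for the paper's appeal to Lemma \ref{6}(ii)).
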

The  authors  of  \cite{CYangDWang}   proved   the  following  theorem$,$  which  is  another
 improvement   of    inequalities   \eqref{19} and  \eqref{110}.
\begin{theorem}\cite{CYangDWang}\label{moradi}
Let
$
0<m \leq A \leq m^{'} <M^{'} \leq B \leq M. $  Then  for  every  positive  unital   linear  map $\Phi,$  $0 \leq \nu \leq 1$  and  for every $p\geq 2$
\begin{align*}
\Phi^{p} ( A\nabla  _{\nu}B )   \leq
\left(  \frac{K(h)}{4^{\frac{2}{p}-1} K^{r}(h^{'})}  \right)^{p}
 \Phi^{p} (A\sharp_{\nu} B)  \\
\Phi^{p} ( A\nabla  _{\nu}B )   \leq
\left(  \frac{K(h)}{4^{\frac{2}{p}-1} K^{r}(h^{'})}  \right)^{p}
 (\Phi(A) \sharp_{\nu}  \Phi( B))^{p},
\end{align*}
where  $r=\min\{ \nu, 1-\nu\},$   $h=\frac{M}{m},$    $h^{'}=\frac{M^{'}}{m^{'}}$ and  $K(h)=\frac{(1+h)^{2}}{4h}$.
\end{theorem}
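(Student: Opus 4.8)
The plan is to run the Fu--He argument behind Theorem~\ref{hoho}, replacing the ordinary reverse of the AM--GM inequality by a Kantorovich-refined one so as to produce the extra factor $K^{-r}(h')$. Two preliminary remarks will be used throughout: since $\Phi$ is unital and positive one has $\Phi(A\nabla_\nu B)=\Phi(A)\nabla_\nu\Phi(B)$ and $mI\le\Phi(A)\le m'I$, $M'I\le\Phi(B)\le MI$, so $(\Phi(A),\Phi(B))$ obeys the same hypotheses as $(A,B)$; and, for positive invertible $U,V$,
\[
{\rm sp}(U^{-1/2}VU^{-1/2})\subseteq[h',h]\ \text{ or }\ [h^{-1},h'^{-1}]\ \Longrightarrow\ K^{r}(h')\,(U\sharp_\nu V)\le U\nabla_\nu V,
\]
where $r=\min\{\nu,1-\nu\}$. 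The implication follows by conjugating with $U^{-1/2}$, which turns it into the scalar inequality $(1-\nu)+\nu t\ge K^{r}(h')\,t^{\nu}$ on the relevant $t$-interval; this comes from the refined Young inequality $(1-\nu)+\nu t\ge K^{r}(t)\,t^{\nu}$ together with the monotonicity of $K(\cdot,2)$ (increasing on $[1,\infty)$, decreasing on $(0,1]$, and invariant under $t\mapsto 1/t$), which gives $K(t,2)\ge K(h',2)$ on both intervals. I will apply it to $(A,B)$, to $(A^{-1},B^{-1})$ (same ratios $h,h'$, with $A^{-1}\sharp_\nu B^{-1}=(A\sharp_\nu B)^{-1}$), and likewise to $\Phi(A),\Phi(B)$.

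Next I assemble a sandwich. From the scalar fact $t+Mm/t\le M+m$ on $[m,M]$ we get $C+MmC^{-1}\le(M+m)I$ for each $C\in\{A,B,\Phi(A),\Phi(B)\}$; taking the $\nu$-weighted average of the inequalities for $A$ and $B$ and bounding the inverse term by the refinement above yields
\[
A\nabla_\nu B+MmK^{r}(h')\,(A\sharp_\nu B)^{-1}\le(M+m)I,
\]
and the analogous statement with $A,B$ replaced by $\Phi(A),\Phi(B)$. Applying the positive unital map $\Phi$ to the first and invoking $\Phi(A\nabla_\nu B)=\Phi(A)\nabla_\nu\Phi(B)$ and $\Phi\big((A\sharp_\nu B)^{-1}\big)\ge\Phi(A\sharp_\nu B)^{-1}$ (operator convexity of $t\mapsto t^{-1}$), I conclude that for both choices $\mathcal Y=\Phi(A\sharp_\nu B)$ and $\mathcal Y=\Phi(A)\sharp_\nu\Phi(B)$,
\[
\mathcal X+\lambda\,\mathcal Y^{-1}\le\mu I,\qquad\mathcal X:=\Phi(A\nabla_\nu B),\quad\lambda:=MmK^{r}(h'),\quad\mu:=M+m .
\]

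Finally I pass to $p$-th powers. Setting $\mathcal Z:=\lambda\mathcal Y^{-1}$ (so $\mathcal X+\mathcal Z\le\mu I$, $\mathcal X,\mathcal Z>0$) and using Lin's lemma $\|PQ\|\le\tfrac14\|P+Q\|^{2}$ for positive $P,Q$, followed by $\|P^{s}+Q^{s}\|\le\|P+Q\|^{s}$ for $s\ge1$ (which follows from the operator subadditivity $(P+Q)^{1/s}\le P^{1/s}+Q^{1/s}$), I obtain, for $p\ge2$,
\[
\big\|\mathcal X^{p/2}\mathcal Y^{-p/2}\big\|=\lambda^{-p/2}\big\|\mathcal X^{p/2}\mathcal Z^{p/2}\big\|\le\tfrac14\lambda^{-p/2}\big\|\mathcal X^{p/2}+\mathcal Z^{p/2}\big\|^{2}\le\tfrac14\lambda^{-p/2}\|\mathcal X+\mathcal Z\|^{p}\le\frac{\mu^{p}}{4\lambda^{p/2}} .
\]
A direct computation using $K(h)=\tfrac{(M+m)^{2}}{4Mm}$ shows $\mu^{p}/(4\lambda^{p/2})=\big(K(h)/(4^{2/p-1}K^{r}(h'))\big)^{p/2}$; and since, for positive invertible operators, $\mathcal X^{p}\le c^{p}\mathcal Y^{p}\iff\|\mathcal X^{p/2}\mathcal Y^{-p/2}\|\le c^{p/2}$, this is exactly the assertion — the choice $\mathcal Y=\Phi(A\sharp_\nu B)$ giving the first inequality of the theorem and $\mathcal Y=\Phi(A)\sharp_\nu\Phi(B)$ the second.

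I expect the main obstacle to be this last step. The difficulty is that $T\mapsto T^{p/2}$ is not operator monotone for $p>2$, so one cannot simply raise the sandwich inequality to the power $p/2$; the norm inequality $\|P^{p/2}+Q^{p/2}\|\le\|P+Q\|^{p/2}$ — which is the only place the hypothesis $p\ge2$, i.e. $p/2\ge1$, enters — is what makes the argument succeed. A secondary point requiring care is to carry the correct exponent $r=\min\{\nu,1-\nu\}$ and the correct reference ratio $h'$ through the Young refinement: the two monotonicity properties of $K$ are precisely what let one scalar inequality handle both the case $A\le B$ and the inverted case $A^{-1}\ge B^{-1}$ needed for the term $(A\sharp_\nu B)^{-1}$.
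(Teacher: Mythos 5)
Your argument is correct and takes essentially the same route as the cited source and as this paper's own proofs of the generalizations (Theorems \ref{nasi-main1} and \ref{nasi-main123}): the Kantorovich-refined AM--GM inequality applied on the spectral interval $[h',h]$ (and $[h^{-1},h'^{-1}]$ for the inverted pair), the sandwich $C+MmC^{-1}\le (M+m)I$, Choi's inequality \eqref{23}, and then Lemma \ref{6}, parts (i)--(iii), to convert the resulting order inequality into the norm estimate that handles $p\ge 2$. The constant bookkeeping $\mu^{p}/(4\lambda^{p/2})=\left(K(h)/(4^{\frac{2}{p}-1}K^{r}(h'))\right)^{p/2}$ is also accurate, so there is nothing to fix.
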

In  this  article,  we   give   some  operator  inequalities  involving positive  linear  maps
that   generalize    inequalities  \eqref{19},   \eqref{110}  and  refine   some  results    in \cite{MB, CYangDWang}.
 Moreover,   we   obtain a  reverse    of     Ando's  inequality.


\section{Some  operator  inequalities  involving positive  linear  maps}
We begin this section with several  essential  lemmas.
\begin{lemma}\cite{RB1} $($Choi's  inequality$)$
Let $A \in{\mathbb B}({\mathscr H})$ be positive and $\Phi $  be  a   positive  unital   linear  map.   Then
\begin{align}\label{23}
\Phi (A)^{-1} \leq \Phi  \left(A^{-1} \right).
\end{align}
\end{lemma}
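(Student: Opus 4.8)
The plan is to obtain \eqref{23} from the positivity of a single $2\times2$ operator block matrix, which is the classical route to Choi's inequality and keeps the argument self-contained. Since $\Phi(A)^{-1}$ and $A^{-1}$ occur in the statement, we tacitly assume $A>0$, so that $A^{1/2}$, $A^{-1/2}$ and $\Phi(A)^{-1}$ all make sense.

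First I would record the factorization, valid on $\mathscr{H}\oplus\mathscr{H}$,
\[
\begin{pmatrix} A & I \\ I & A^{-1}\end{pmatrix}
=\begin{pmatrix} A^{1/2}\\ A^{-1/2}\end{pmatrix}\begin{pmatrix} A^{1/2} & A^{-1/2}\end{pmatrix}\ge 0,
\]
the right-hand side being of the form $V^{*}V$. Next, let $\Phi_{2}$ denote the amplification of $\Phi$ acting entrywise on $2\times2$ operator matrices, $\Phi_{2}\big([X_{ij}]\big)=[\Phi(X_{ij})]$; one checks that $\Phi_{2}$ is again a positive unital linear map — positivity follows because every positive $2\times2$ operator matrix is a finite sum of matrices of the form $Y^{*}Y$, and $\Phi$ preserves such sums together with positivity. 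Applying $\Phi_{2}$ to the displayed inequality and using $\Phi(I)=I$ yields
\[
\begin{pmatrix} \Phi(A) & I \\ I & \Phi(A^{-1})\end{pmatrix}\ge 0.
\]

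Finally I would extract \eqref{23} by pairing this positive block matrix with the vector $(-\Phi(A)^{-1}x,\,x)\in\mathscr{H}\oplus\mathscr{H}$ for an arbitrary $x\in\mathscr{H}$: a direct computation collapses the first component of the product to $0$ and leaves $\langle(\Phi(A^{-1})-\Phi(A)^{-1})x,x\rangle\ge 0$, so that $\Phi(A^{-1})-\Phi(A)^{-1}\ge 0$, which is exactly \eqref{23}. Equivalently, one may quote the Schur-complement criterion: a block matrix $\left(\begin{smallmatrix}X&I\\ I&Y\end{smallmatrix}\right)$ with $X>0$ is positive if and only if $Y\ge X^{-1}$.

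The only genuinely delicate point is the verification that the amplified map $\Phi_{2}$ is positive; everything else is routine linear algebra. An alternative would be to deduce \eqref{23} from the operator convexity of $t\mapsto t^{-1}$ on $(0,\infty)$ via the Davis--Choi--Jensen inequality, but since that inequality is itself customarily proved by the same block-matrix device, the direct argument above is preferable in a preliminaries section.
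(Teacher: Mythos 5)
The paper states this lemma without proof (it is quoted from Bhatia's book), so there is no internal argument to compare against; judged on its own terms, your proof has one genuine gap, and it sits exactly at the step you yourself flag as the delicate one. The assertion that the amplification $\Phi_{2}$ is a positive map on all of $M_{2}(\mathbb{B}(\mathscr{H}))$ is false for a general positive unital $\Phi$: positivity of $[\Phi(X_{ij})]$ for every positive block matrix $[X_{ij}]$ is precisely the statement that $\Phi$ is $2$-positive, and the transpose map on $M_{2}(\mathbb{C})$ is the standard example of a positive unital map that is not $2$-positive (applying it entrywise to the positive matrix $\sum_{i,j}e_{ij}\otimes e_{ij}$ produces the swap operator $\sum_{i,j}e_{ij}\otimes e_{ji}$, which has eigenvalue $-1$). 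Your proposed justification --- decompose a positive block matrix as a sum of terms $Y^{*}Y$ and use that ``$\Phi$ preserves such sums together with positivity'' --- begs the question: the positivity of $\Phi_{2}(Y^{*}Y)=\big[\Phi\big(\sum_{k}Y_{ki}^{*}Y_{kj}\big)\big]$ for every block row $Y$ is again exactly $2$-positivity, whereas mere positivity of $\Phi$ only controls the diagonal entries.

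The inequality you need is nevertheless true, and the repair is short: all entries $A$, $I$, $A^{-1}$ of your block matrix lie in the commutative $C^{*}$-algebra generated by $A$ and $I$, and a positive linear map restricted to a commutative domain is automatically completely positive (Stinespring), so $\Phi_{2}$ does map this particular positive element to a positive element. Concretely, and without invoking complete positivity, approximate $A$ in norm by $\sum_{i}\lambda_{i}P_{i}$ with $\lambda_{i}>0$ and $P_{i}$ mutually orthogonal spectral projections summing to $I$ (possible uniformly in $A^{-1}$ as well, since ${\rm sp}(A)$ is bounded away from $0$); then
\begin{equation*}
\begin{pmatrix} A & I\\ I & A^{-1}\end{pmatrix}
=\sum_{i}\begin{pmatrix}\lambda_{i} & 1\\ 1 & \lambda_{i}^{-1}\end{pmatrix}\otimes P_{i},
\end{equation*}
and applying $\Phi$ entrywise yields $\sum_{i}\bigl(\begin{smallmatrix}\lambda_{i} & 1\\ 1 & \lambda_{i}^{-1}\end{smallmatrix}\bigr)\otimes\Phi(P_{i})$, a sum of tensor products of positive factors, hence positive. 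With that step repaired, your rank-one factorization of the block matrix, the use of $\Phi(I)=I$, and the final Schur-complement (or test-vector) extraction of $\Phi(A^{-1})\geq\Phi(A)^{-1}$ are all correct.
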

The next lemma, part $\rm{(i)}$ is proved for matrices but a careful investigation shows that it is true
for operators on an arbitrary Hilbert space; see \cite[page 79]{moslehian2}.
\begin{lemma}\cite{RBhatiaFKittaneh,FKuboTAndo, MB}\label{6}
Let $A, B\in{\mathbb B}({\mathscr H})$ be positive   and $\alpha>0$. Then \\
$\rm{(i)}\,\,||{\rm AB}||\leq\frac{1}{4}||A+B||^{2}.$\\
$\rm{(ii)}\,\,||A^{\alpha}+B^{\alpha}||\leq ||(A+B)^{\alpha}||.$\\
$\rm{(iii)}\,\, A\leq\alpha B$ if and only if $||A^{\frac{1}{2}}B^{-\frac{1}{2}}||\leq \alpha^{\frac{1}{2}}.$
\end{lemma}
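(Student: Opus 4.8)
The plan is to prove the three parts in turn, from the shortest to the most substantial, using only the functional calculus recalled in the introduction.

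\emph{Part (iii).} Since $B>0$ is invertible, conjugation by $B^{-1/2}$ preserves the operator order, so $A\le\alpha B$ is equivalent to $B^{-1/2}AB^{-1/2}\le\alpha I$. The operator $B^{-1/2}AB^{-1/2}=(A^{1/2}B^{-1/2})^{*}(A^{1/2}B^{-1/2})$ is positive, and a positive operator $T$ satisfies $T\le\alpha I$ precisely when $\|T\|\le\alpha$. Since $\|(A^{1/2}B^{-1/2})^{*}(A^{1/2}B^{-1/2})\|=\|A^{1/2}B^{-1/2}\|^{2}$, this condition becomes $\|A^{1/2}B^{-1/2}\|\le\alpha^{1/2}$, which is exactly (iii).

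\emph{Part (ii)} (for $\alpha\ge1$, the range in which it is applied). By homogeneity we may assume $\|A+B\|=1$, so that $0\le A+B\le I$ and in particular $0\le A\le I$ and $0\le B\le I$. Then $A^{\alpha-1}\le I$ by the functional calculus, hence $A^{\alpha}=A^{1/2}A^{\alpha-1}A^{1/2}\le A^{1/2}IA^{1/2}=A$, and likewise $B^{\alpha}\le B$. Adding, $A^{\alpha}+B^{\alpha}\le A+B\le I$, whence $\|A^{\alpha}+B^{\alpha}\|\le1=\|(A+B)^{\alpha}\|$; rescaling recovers the stated inequality.

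\emph{Part (i).} This is the only part with genuine content, and I would deduce it from the Bhatia--Kittaneh arithmetic--geometric mean inequality for the norm of a product of two positive operators. In the matrix case one works in an orthonormal basis diagonalising $A+B$ and runs a $2\times2$ block (compression) estimate, or equivalently establishes the singular value bound $s_{j}(AB)\le\frac14\,s_{j}\big((A+B)^{2}\big)$ for every $j$ and reads off the largest one, obtaining $\|AB\|\le\frac14\|A+B\|^{2}$. The extension from matrices to operators on an arbitrary Hilbert space is precisely the point flagged in the remark preceding the lemma (see the cited source): one compresses to finite-dimensional subspaces and passes to the limit.

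I expect part (i) to be the main obstacle. The naive estimate $\|AB\|\le\|A\|\,\|B\|\le\|A+B\|^{2}$ is off by a factor of $4$, so reaching the sharp constant $\frac14$ really does require the singular value (equivalently block-matrix) machinery, and one must also check that this argument is stable under the passage from matrices to general operators; parts (ii) and (iii), by contrast, are short functional-calculus computations.
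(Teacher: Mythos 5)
The paper does not prove this lemma at all: it is imported verbatim from the cited references (Bhatia--Kittaneh for (i), and the remark before the lemma only addresses the matrix-to-operator passage), so there is no in-paper argument to compare against. Your proofs of (ii) and (iii) are correct and self-contained. For (iii), the chain $A\le\alpha B\Leftrightarrow B^{-1/2}AB^{-1/2}\le\alpha I\Leftrightarrow\|(A^{1/2}B^{-1/2})^{*}(A^{1/2}B^{-1/2})\|\le\alpha$ is exactly right. For (ii), your restriction to $\alpha\ge1$ is not merely a convenience but a necessary correction to the statement: for $0<\alpha<1$ the inequality reverses (already for scalars, $a=b=1$ gives $\|A^{\alpha}+B^{\alpha}\|=2>2^{\alpha}=\|(A+B)^{\alpha}\|$), so the lemma as printed, for all $\alpha>0$, is false; fortunately every application in the paper uses exponents $p/2$, $p/4$, $p/(2\alpha)\ge1$, so your version suffices. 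The normalization $\|A+B\|=1$, the step $A^{\alpha}=A^{1/2}A^{\alpha-1}A^{1/2}\le A$ (valid since ${\rm sp}(A)\subseteq[0,1]$ gives $A^{\alpha-1}\le I$ by functional calculus, not L\"owner--Heinz), and the identity $\|(A+B)^{\alpha}\|=\|A+B\|^{\alpha}$ are all sound.

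The one genuine shortfall is part (i): what you give is an accurate attribution and a plausible outline (the singular-value bound $s_{j}(AB)\le\frac14 s_{j}\left((A+B)^{2}\right)$ from Bhatia--Kittaneh, followed by a compression/limit argument for infinite dimensions), but not a proof. The constant $\frac14$ is the whole point --- as you note, the naive submultiplicativity estimate only yields $\|A\|\,\|B\|\le\|A+B\|^{2}$, and even the sharper route via $\|AB\|^{2}=\|BA^{2}B\|\le\|B(I-B)B\|$-type bounds misses $\frac14$ --- so deferring it leaves the key inequality of the lemma unestablished. Since the paper itself merely cites this fact, your treatment is no worse than the source text, but if a self-contained proof is wanted you would still need to reproduce the Bhatia--Kittaneh block-matrix argument.
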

To  obtain  our  results,     we   need  to    prove  the  following  lemma.    Its   proof   is   similar to that of   \cite[Theorem 3.1]{JLWuJGZhao}.
\begin{lemma}\label{233}
Suppose  that $A,B\in {\mathbb B}({\mathscr H})$ are positive   and  $m,m^{'},M,M^{'}$ are positive  real  numbers
satisfying    either one of   the  following  conditions:
\begin{enumerate}
\item
 $ 0<m \leq B \leq m^{'} <M^{'} \leq A \leq M $;
\item
$
0<m \leq A \leq m^{'} <M^{'} \leq B \leq M. $
\end{enumerate}
Then  for   every  $ 0 \leq \nu \leq 1$,
\begin{align}\label{25}
2r \left( A^{-1} \nabla  B^{-1} -A^{-1} \sharp  B^{-1}\right) +K^{r_{1}} \left( \sqrt {h^{'}}\right) \left( A^{-1} \sharp_{\nu}  B^{-1}\right)
\leq \left( A^{-1}  \nabla_{\nu} B^{-1} \right),
\end{align}
   where
$r=\min\{\nu,1-\nu\},$   $r_{1}=\min\{2r,1-2r\},$     $K(h)=\frac{(1+h)^{2}}{4h},$   $h=\frac{M}{m}$  and
$h^{'}=\frac{M^{'}}{m^{'}}.$
\end{lemma}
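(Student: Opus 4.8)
The plan is to pull \eqref{25} back to a scalar inequality by a congruence transformation and then apply the functional calculus. Since $m>0$, both $A$ and $B$ are positive invertible, so $X:=A^{\frac12}B^{-1}A^{\frac12}$ is a positive invertible operator. Multiplying \eqref{25} on both sides by $A^{\frac12}$ and using
\[
A^{\frac12}\bigl(A^{-1}\nabla B^{-1}\bigr)A^{\frac12}=\frac{I+X}{2},\qquad
A^{\frac12}\bigl(A^{-1}\sharp B^{-1}\bigr)A^{\frac12}=X^{\frac12},
\]
\[
A^{\frac12}\bigl(A^{-1}\sharp_{\nu}B^{-1}\bigr)A^{\frac12}=X^{\nu},\qquad
A^{\frac12}\bigl(A^{-1}\nabla_{\nu}B^{-1}\bigr)A^{\frac12}=(1-\nu)I+\nu X,
\]
together with the identity $\tfrac{I+X}{2}-X^{\frac12}=\tfrac12\bigl(I-X^{\frac12}\bigr)^{2}$, and the fact that conjugation by the invertible operator $A^{\frac12}$ preserves and reflects the order, we see that \eqref{25} is equivalent to
\[
r\bigl(I-X^{\frac12}\bigr)^{2}+K^{r_{1}}\!\bigl(\sqrt{h'}\,\bigr)\,X^{\nu}\;\le\;(1-\nu)I+\nu X .
\]

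Next I would localise the spectrum of $X$. Under condition (1) one has $M'I\le A\le MI$ and $\tfrac1{m'}I\le B^{-1}\le\tfrac1mI$, so that $h'I\le X\le hI$; under condition (2) the same computation gives $\tfrac1hI\le X\le\tfrac1{h'}I$. In both cases every $x\in{\rm sp}(X)$ satisfies $\max\{x,1/x\}\ge h'>1$, hence $K(\sqrt x)=K(\max\{\sqrt x,1/\sqrt x\,\})\ge K(\sqrt{h'})\ge1$ because $K(t)=K(1/t)$ and $K$ is increasing on $[1,\infty)$; since $r_{1}\ge0$ this gives $K^{r_{1}}(\sqrt{h'})\,x^{\nu}\le K^{r_{1}}(\sqrt x)\,x^{\nu}$ for all $x\in{\rm sp}(X)$.

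It therefore suffices to prove the scalar estimate
\[
r\bigl(1-\sqrt x\,\bigr)^{2}+K^{r_{1}}\!\bigl(\sqrt x\,\bigr)\,x^{\nu}\;\le\;(1-\nu)+\nu x\qquad(x>0,\ 0\le\nu\le1).
\]
Combined with the preceding paragraph this yields $r(1-\sqrt x)^{2}+K^{r_{1}}(\sqrt{h'})x^{\nu}\le(1-\nu)+\nu x$ for every $x\in{\rm sp}(X)$; applying the Gelfand map of $X$ to the continuous functions on the two sides produces the operator inequality displayed above, and conjugating back by $A^{\frac12}$ recovers \eqref{25}. The scalar estimate is of the type handled in \cite[Theorem 3.1]{JLWuJGZhao}: substituting $x=t^{2}$ it reads $r(1-t)^{2}+K^{r_{1}}(t)\,t^{2\nu}\le(1-\nu)+\nu t^{2}$, and one proves it by distinguishing the cases $\nu\in[0,\tfrac14]$, $[\tfrac14,\tfrac12]$, $[\tfrac12,\tfrac34]$, $[\tfrac34,1]$ (so that $r_{1}$ equals $2\nu$, $1-2\nu$, $2\nu-1$, $2(1-\nu)$, respectively), on each piece feeding the Kittaneh--Manasrah refinement $t^{2\nu}+r(1-t)^{2}\le(1-\nu)+\nu t^{2}$ of Young's inequality into the explicit expression $K(t)=\frac{(1+t)^{2}}{4t}$; the values $\nu=\tfrac14$ and $\nu=\tfrac34$ give equality because $K^{\frac12}(t)=\frac{1+t}{2\sqrt t}$.

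The scalar estimate is the only real content; the congruence reduction and the return to operators are formal. Accordingly the point requiring care is the four-case verification of that scalar inequality, namely checking that on each subinterval the chosen grouping of the refined Young inequality dominates the factor $K^{r_{1}}(t)\,t^{2\nu}$, with the boundary parameters $\nu\in\{0,\tfrac14,\tfrac12,\tfrac34,1\}$ serving as equality checks.
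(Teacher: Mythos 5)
Your argument is correct and follows essentially the same route as the paper's proof: reduce \eqref{25} by the congruence $T\mapsto A^{\frac12}TA^{\frac12}$ to the scalar inequality $2r\bigl(\tfrac{1+x}{2}-\sqrt{x}\bigr)+K^{r_{1}}(\sqrt{x})x^{\nu}\le(1-\nu)+\nu x$ from Wu--Zhao, localise ${\rm sp}(X)$ for $X=A^{\frac12}B^{-1}A^{\frac12}$, replace $K^{r_{1}}(\sqrt{x})$ by its minimum $K^{r_{1}}(\sqrt{h'})$ over the spectrum via the monotonicity (and, in case (2), the symmetry $K(t)=K(1/t)$) of the Kantorovich constant, and apply the functional calculus. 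Your treatment is, if anything, slightly more complete than the paper's, since you handle condition (2) explicitly where the paper only says ``similar,'' and you sketch the case analysis behind the cited scalar lemma.
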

\begin{proof}
It follows  from \cite[Lemma 2.3]{JLWuJGZhao} that
\begin{align*}
2r \left( \frac {1+x}{2}-\sqrt{x}\right)+K^{r_{1}}(\sqrt{x}) x^{\nu} \leq (1-\nu)+\nu x
\end{align*}
for  any  $x>0.$   The  first   condition,  that is,
$
0<m \leq B \leq m^{'} <M^{'} \leq A \leq M
$
ensures   that
$
1< h^{'} I =\frac{M^{'}}{m^{'}}I \leq A^{\frac{1}{2}} B^{-1} A^{\frac{1}{2}} \leq  \frac{M}{m}I =h I.
$
By  setting  $X=A^{\frac{1}{2}} B^{-1} A^{\frac{1}{2}},$  we  see
$  {\rm  sp}(X)\subseteq  [h^{'},h] \subset (1,+\infty).$  Now,  the monotonicity  principle    for  operator  functions   yields   the   inequality
\begin{align}\label{26}
 (1-\nu)+\nu  X  &\geq   2r \left( \frac {I+X}{2}-\sqrt{X}\right)+ \min_{h^{'} \leq x \leq h}K^{r_{1}}(\sqrt{x}) X^{\nu}  \nonumber \\
 &\geq  2r \left( \frac {I+X}{2}-\sqrt{X}\right)+ K^{r_{1}}\left(\sqrt{h^{'}} \right) X^{\nu}.
\end{align}
The   last  above inequality    follows   by  the increasing   property  of  the  function
$K(t)$   on   the  interval  $(1,+\infty) $;  see    \cite{TFurutaJMicicHotJPecaricYSeo}.
 Finally,   multiplying  the both  sides  of    inequality \eqref{26}  by  $A^{-\frac{1}{2}},$   we obtain
the  desired  result.
 The   inequality   can   be  proved    under   the   second  condition �� (2) in  a  similar   way.

\end{proof}
Our   first  main  result  is  the  following:
\begin{theorem}\label{nasi-main1}
Suppose that $A,B\in {\mathbb B}({\mathscr H})$ are positive   and  $m,m^{'},M,M^{'}$ are positive  real  numbers
satisfying    either  one of   the  following  conditions:
\begin{enumerate}
\item
 $ 0<m \leq B \leq m^{'} <M^{'} \leq A \leq M $;
\item
$
0<m \leq A \leq m^{'} <M^{'} \leq B \leq M. $
\end{enumerate}
Then   for every  positive  unital   linear map $\Phi $   and  every  $ 0 \leq \nu \leq 1$
\begin{align}\label{27}
 \Phi  ^{2} \big(A \nabla _{v} B+2 r Mm \big(A^{-1}\nabla  B^{-1} &-
A^{-1} \sharp B^{-1}
\big)\big) \nonumber\\
&\leq    \left( \frac{K(h)}{ K^{r_{1}}\left( \sqrt {h^{'}} \right)} \right) ^{2} \Phi^{2} (A \sharp_{\nu} B)
\end{align}
and
\begin{align}\label{281}
 \Phi  ^{2}\big(A \nabla _{v} B+2 r Mm \big(A^{-1}\nabla  B^{-1}&-
A^{-1} \sharp B^{-1}
\big)\big) \nonumber\\
&\leq   \left( \frac{K(h)}{ K^{r_{1}}\left( \sqrt {h^{'}} \right)} \right) ^{2} ( \Phi(A ) \sharp_{\nu}\Phi(B))^{2},
\end{align}
where $r=\min\{\nu,1-\nu\},$  $K(h)=\frac{(1+h)^{2}}{4h},$
 $h=\frac{M}{m},$  $h^{'}=\frac{M^{'}}{m^{'}}$  and
$r_{1}=\min\{2r,1-2r\}.$
\end{theorem}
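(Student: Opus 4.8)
The plan is to follow the method Lin used for \eqref{15}: reduce each of \eqref{27} and \eqref{281} to an operator-norm estimate via the criterion of Lemma \ref{6} that $X\le\alpha Y$ is equivalent to $\|X^{1/2}Y^{-1/2}\|\le\sqrt{\alpha}$, and then prove that norm estimate using $\|XY\|\le\frac14\|X+Y\|^{2}$, also from Lemma \ref{6}. Write $C:=A\nabla_{\nu}B+2rMm\left(A^{-1}\nabla B^{-1}-A^{-1}\sharp B^{-1}\right)$. Applying \eqref{12} to $A^{-1},B^{-1}$ gives $A^{-1}\nabla B^{-1}-A^{-1}\sharp B^{-1}\ge0$, so $C\ge A\nabla_{\nu}B\ge mI$; similarly $\Phi(A\sharp_{\nu}B)\ge mI$ and $\Phi(A)\sharp_{\nu}\Phi(B)\ge mI$, so every operator inverted below is positive and invertible. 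As a preliminary observation, both hypotheses $(1)$ and $(2)$ force $mI\le A,B\le MI$, whence the scalar inequality $t+\frac{Mm}{t}\le M+m$ on $[m,M]$ yields $A+MmA^{-1}\le(M+m)I$ and $B+MmB^{-1}\le(M+m)I$; taking the $\nu$-weighted arithmetic mean of these two,
\[
A\nabla_{\nu}B+Mm\left(A^{-1}\nabla_{\nu}B^{-1}\right)\le(M+m)I .
\]

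The crux is the estimate $\Phi(C)+t\,\Phi(A\sharp_{\nu}B)^{-1}\le(M+m)I$, where $t:=MmK^{r_{1}}\!\left(\sqrt{h^{'}}\right)$. By Choi's inequality \eqref{23} and $(A\sharp_{\nu}B)^{-1}=A^{-1}\sharp_{\nu}B^{-1}$ we have $t\,\Phi(A\sharp_{\nu}B)^{-1}\le t\,\Phi\!\left(A^{-1}\sharp_{\nu}B^{-1}\right)$. Rewriting the conclusion \eqref{25} of Lemma \ref{233} as $K^{r_{1}}(\sqrt{h^{'}})\,A^{-1}\sharp_{\nu}B^{-1}\le A^{-1}\nabla_{\nu}B^{-1}-2r\left(A^{-1}\nabla B^{-1}-A^{-1}\sharp B^{-1}\right)$, multiplying by $Mm$ and applying the positive linear map $\Phi$ gives
\[
t\,\Phi\!\left(A^{-1}\sharp_{\nu}B^{-1}\right)\le Mm\,\Phi\!\left(A^{-1}\nabla_{\nu}B^{-1}\right)-2rMm\,\Phi\!\left(A^{-1}\nabla B^{-1}-A^{-1}\sharp B^{-1}\right).
\]
Since $\Phi(C)=\Phi(A\nabla_{\nu}B)+2rMm\,\Phi\!\left(A^{-1}\nabla B^{-1}-A^{-1}\sharp B^{-1}\right)$, adding the last two displays cancels the $2rMm$-terms and leaves $\Phi(C)+t\,\Phi(A\sharp_{\nu}B)^{-1}\le\Phi\!\left(A\nabla_{\nu}B+Mm\left(A^{-1}\nabla_{\nu}B^{-1}\right)\right)\le(M+m)I$ by the preliminary observation. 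This proves the crux.

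Now \eqref{27} follows quickly. Apply $\|XY\|\le\frac14\|X+Y\|^{2}$ with $X=\Phi(C)$ and $Y=t\,\Phi(A\sharp_{\nu}B)^{-1}$: the crux gives $\|X+Y\|\le M+m$, so $t\,\left\|\Phi(C)\Phi(A\sharp_{\nu}B)^{-1}\right\|=\|XY\|\le\frac14(M+m)^{2}$ and hence
\[
\left\|\Phi(C)\,\Phi(A\sharp_{\nu}B)^{-1}\right\|\le\frac{(M+m)^{2}}{4MmK^{r_{1}}(\sqrt{h^{'}})}=\frac{K(h)}{K^{r_{1}}(\sqrt{h^{'}})},
\]
using $K(h)=\frac{(M+m)^{2}}{4Mm}$. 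Because $\Phi(C)=\left(\Phi^{2}(C)\right)^{1/2}$ and $\Phi(A\sharp_{\nu}B)^{-1}=\left(\Phi^{2}(A\sharp_{\nu}B)\right)^{-1/2}$, the criterion of Lemma \ref{6} with $\alpha=\left(K(h)/K^{r_{1}}(\sqrt{h^{'}})\right)^{2}$ converts this into $\Phi^{2}(C)\le\left(K(h)/K^{r_{1}}(\sqrt{h^{'}})\right)^{2}\Phi^{2}(A\sharp_{\nu}B)$, which is \eqref{27}.

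For \eqref{281}, only the first step of the crux changes: by Ando's inequality $\Phi(A\sharp_{\nu}B)\le\Phi(A)\sharp_{\nu}\Phi(B)$, hence $\left(\Phi(A)\sharp_{\nu}\Phi(B)\right)^{-1}\le\Phi(A\sharp_{\nu}B)^{-1}$, so $t\left(\Phi(A)\sharp_{\nu}\Phi(B)\right)^{-1}$ is again dominated by $t\,\Phi\!\left(A^{-1}\sharp_{\nu}B^{-1}\right)$ and the rest of the chain goes through verbatim, giving $\Phi(C)+t\left(\Phi(A)\sharp_{\nu}\Phi(B)\right)^{-1}\le(M+m)I$ and then \eqref{281} via Lemma \ref{6}. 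I expect the one point needing care to be the bookkeeping in the crux: the refining summand $2rMm\left(A^{-1}\nabla B^{-1}-A^{-1}\sharp B^{-1}\right)$ built into $C$ must be cancelled \emph{exactly} by the term Lemma \ref{233} produces, which is precisely what dictates the choice $t=MmK^{r_{1}}(\sqrt{h^{'}})$ and makes that refining term cost nothing; apart from that, one only needs to keep straight the direction of each inequality used (Choi, Lemma \ref{233}, Ando, and the scalar bound) and the invertibility of $C$, $\Phi(A\sharp_{\nu}B)$ and $\Phi(A)\sharp_{\nu}\Phi(B)$ that makes Lemma \ref{6} applicable.
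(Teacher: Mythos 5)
Your proof is correct and follows essentially the same route as the paper's: reduce \eqref{27} to the norm bound $\|\Phi(C)\,tMm^{0}\Phi(A\sharp_{\nu}B)^{-1}\|\le\frac{(M+m)^{2}}{4}$ via Lemma \ref{6}(iii), establish it with Lemma \ref{6}(i) together with Choi's inequality, Lemma \ref{233}, and the scalar bound $T+MmT^{-1}\le(M+m)I$. The only (harmless) difference is organizational — you prove the operator estimate $\Phi(C)+t\,\Phi(A\sharp_{\nu}B)^{-1}\le(M+m)I$ first and you carry out the \eqref{281} case explicitly via Ando's inequality, which the paper leaves to the reader.
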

\begin{proof}
We  shall  prove     inequality  \eqref{27},   and     leave    inequality \eqref{281}  to  the   reader   because   the  proof  is   similar.
By  Lemma \ref{233},   inequality \eqref{27} is equivalent to
\begin{align*}
& \left\| \Phi \left(A \nabla _{v} B+2 r Mm \left(A^{-1}\nabla  B^{-1}-
A^{-1} \sharp B^{-1}
\right)\right) Mm K^{r_{1}}\left( \sqrt {h^{'}}\right)  \Phi^{-1} (A \sharp_{\nu} B)  \right\| \\
&  \leq  \frac{(M+m)^{2}}{4}.
\end{align*}
Using    Lemma \ref{6}, inequalities  \eqref{23},   \eqref{25} and   the linear  property  of $\Phi,$   we obtain
\begin{align*}
& \Big\|  \Phi \left(A \nabla _{v} B+2 r Mm  \left(A^{-1}\nabla  B^{-1}-
A^{-1} \sharp B^{-1}
\right)\right)
 Mm K^{r_{1}}\left( \sqrt {h^{'}}\right) \Phi^{-1}(A\sharp_{\nu} B) \Big \| \\
 & \leq
\frac{1}{4}
 \Big\| \Phi \Big(A \nabla _{v} B+2 r Mm \Big(A^{-1}\nabla  B^{-1}-
A^{-1} \sharp B^{-1}
\Big)\Big) \\
&\,\,\, + Mm K^{r_{1}}\left( \sqrt {h^{'}} \right) \Phi \Big(A^{-1}\sharp_{\nu} B^{-1}\Big)  \Big\|^{2}\\
&=
\frac{1}{4}
\bigg\| \Phi (A \nabla _{v} B)+ Mm \Big(\Phi \Big ( 2r  \Big(A^{-1}\nabla  B^{-1}- A^{-1} \sharp B^{-1} \Big) \\
&\,\,\,+ K^{r_{1}}\left( \sqrt {h^{'}} \right) \Big(A^{-1}\sharp_{\nu} B^{-1} \Big) \Big) \Big) \bigg\|^{2} \\
& \leq  \frac{1}{4}
\Big\| \Phi \left(A \nabla _{v} B \right) +Mm   \Phi \Big(A^{-1} \nabla _{v} B^{-1} \Big) \Big\|^{2}  \\
& \leq  \frac{(M+m)^{2}}{4}.
\end{align*}
The  last above inequality     holds
since   by    our  assumptions,
\begin{align*}
A+Mm A^{-1} \leq Mm
\qquad \textrm{and}\qquad
B+Mm B^{-1} \leq Mm.
\end{align*}
By  multiplying   the  inequalities  above   by  $(1-\nu)$  and  $\nu,$  respectively,   and  then  summing up  the  derived  inequalities,   we  get
$$  A \nabla _{v} B +Mm  \Big(A^{-1} \nabla _{v} B^{-1} \Big)
 \leq  M+m. $$
Since   $\Phi$   is  a  positive  linear map,  we  obtain
  $$ \Phi  \left(A \nabla _{v} B \right) +Mm   \Phi \Big(A^{-1} \nabla _{v} B^{-1} \Big)
 \leq  M+m. $$
So,  inequality \eqref{27}   holds. \\
\end{proof}
\begin{corollary}
Suppose that $A,B\in {\mathbb B}({\mathscr H})$ are positive   and $m,m^{'},M,M^{'}$ are positive  real  numbers
satisfying    either  one of   the  following  conditions:
\begin{enumerate}
\item
 $ 0<m \leq B \leq m^{'} <M^{'} \leq A \leq M $;
\item
$
0<m \leq A \leq m^{'} <M^{'} \leq B \leq M. $
\end{enumerate}
Then
   for every   positive  unital   linear map $\Phi ,$   $ 0 \leq \nu \leq 1$  and  for  every  $ 0 < p \leq 2$
\begin{align*}
 \Phi  ^{p} \big(A \nabla _{v} B+2 r Mm \big(A^{-1}\nabla  B^{-1}&-
A^{-1} \sharp B^{-1}
\big)\big)\nonumber \\
& \leq    \left( \frac{K(h)}{ K^{r_{1}} \left( \sqrt {h^{'}} \right)} \right) ^{p} \Phi^{p} (A \sharp_{\nu} B)
\end{align*}
and
\begin{align*}
 \Phi  ^{p} \big(A \nabla _{v} B+2 r Mm \big(A^{-1}\nabla  B^{-1}&-
A^{-1} \sharp B^{-1}
\big)\big)\nonumber \\
& \leq   \left( \frac{K(h)}{ K^{r_{1}} \left( \sqrt {h^{'}} \right)} \right) ^{p} (\Phi (A) \sharp_{\nu} \Phi (B))^{p},
\end{align*}
where $r=\min\{\nu,1-\nu\},$  $K(h)=\frac{(1+h)^{2}}{4h},$
  $h=\frac{M}{m},$  $h^{'}=\frac{M^{'}}{m^{'}}$  and
$r_{1}=\min\{2r,1-2r\}.$
\end{corollary}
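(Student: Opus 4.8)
The plan is to deduce the Corollary directly from Theorem~\ref{nasi-main1} (the case $p=2$) together with the L\"owner--Heinz inequality \eqref{14}. Abbreviate
\[
X := A \nabla_{\nu} B + 2rMm\bigl(A^{-1}\nabla B^{-1} - A^{-1}\sharp B^{-1}\bigr),
\qquad
C := \frac{K(h)}{K^{r_{1}}\!\left(\sqrt{h^{'}}\right)},
\]
so that the two conclusions of Theorem~\ref{nasi-main1} read $\Phi^{2}(X) \leq C^{2}\,\Phi^{2}(A\sharp_{\nu}B)$ and $\Phi^{2}(X) \leq C^{2}\,(\Phi(A)\sharp_{\nu}\Phi(B))^{2}$.

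First I would record the positivity facts that let us manipulate fractional powers. Applying the AM--GM operator inequality \eqref{12} to the positive invertible operators $A^{-1}$ and $B^{-1}$ yields $A^{-1}\sharp B^{-1} \leq A^{-1}\nabla B^{-1}$, hence $A^{-1}\nabla B^{-1} - A^{-1}\sharp B^{-1}\geq 0$; since $r\geq 0$, $Mm>0$ and $A\nabla_{\nu}B>0$, this gives $X \geq A\nabla_{\nu}B>0$. As $\Phi$ is positive and unital and all operators in sight are sandwiched between positive multiples of $I$, the operators $\Phi(X)$, $\Phi(A\sharp_{\nu}B)$ and $\Phi(A)\sharp_{\nu}\Phi(B)$ are positive and invertible. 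Consequently $\bigl((\Phi(X))^{2}\bigr)^{p/2} = \Phi^{p}(X)$, while $\bigl(C^{2}(\Phi(A\sharp_{\nu}B))^{2}\bigr)^{p/2} = C^{p}\,\Phi^{p}(A\sharp_{\nu}B)$ and $\bigl(C^{2}(\Phi(A)\sharp_{\nu}\Phi(B))^{2}\bigr)^{p/2} = C^{p}\,(\Phi(A)\sharp_{\nu}\Phi(B))^{p}$, using that $C>0$ is a scalar.

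Then, since $0<p\leq 2$ forces $0<\tfrac{p}{2}\leq 1$, I would apply \eqref{14} with exponent $p/2$ to each inequality of Theorem~\ref{nasi-main1}; raising both sides to the power $p/2$ preserves the order, and in view of the identities just noted this is precisely
\[
\Phi^{p}(X) \leq C^{p}\,\Phi^{p}(A\sharp_{\nu}B)
\qquad\text{and}\qquad
\Phi^{p}(X) \leq C^{p}\,(\Phi(A)\sharp_{\nu}\Phi(B))^{p},
\]
which are the claimed inequalities.

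There is no real obstacle here; the only delicate point is the algebra of fractional powers --- that $((\Phi(X))^{2})^{p/2}$ genuinely equals $(\Phi(X))^{p}$, for which the positivity of $\Phi(X)$ is indispensable. This also explains why the argument halts at $p=2$: for $p>2$ the exponent $p/2$ leaves the range $(0,1]$ on which \eqref{14} is valid, and one would instead have to fall back on the squaring-type argument of Theorem~\ref{nasi-main1}, which is responsible for the correction factor $4^{\frac{2}{p}-1}$ appearing in the estimates stated in the abstract.
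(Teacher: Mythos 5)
Your proposal is correct and is exactly the paper's argument: the authors likewise deduce the corollary by applying the L\"owner--Heinz inequality \eqref{14} with exponent $\tfrac{p}{2}\in(0,1]$ to the two conclusions \eqref{27} and \eqref{281} of Theorem~\ref{nasi-main1}. Your additional remarks on positivity and the compatibility of fractional powers merely make explicit what the paper leaves implicit.
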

\begin{proof}
If  $ 0< p\leq 2,$  then  $0 < \frac{p}{2} \leq 1$. Using    inequalities   \eqref{14},  \eqref{27}  and   \eqref{281},    we   get
the   desired results.
\end{proof}

\begin{theorem}\label{nasi-main123}
Suppose  that $A,B\in{\mathbb B}({\mathscr H})$ are positive   and  $m,m^{'},M,M^{'}$ are positive  real  numbers
satisfying    either one of   the  following  conditions:
\begin{enumerate}
\item
 $ 0<m \leq B \leq m^{'} <M^{'} \leq A \leq M $;
\item
$
0<m \leq A \leq m^{'} <M^{'} \leq B \leq M. $
\end{enumerate}
Then
   for every  positive  unital   linear map $\Phi ,$   $ 0 \leq \nu \leq 1$  and  for  every  $p \geq  2,$  we  have
   \begin{align}\label{211}
 \Phi  ^{p} \big(A \nabla _{v} B+2 r Mm (A^{-1}\nabla  B^{-1}&-
A^{-1} \sharp B^{-1}
)\big)\nonumber \\
& \leq    \left( \frac{K(h)}{ 4^{\frac{2}{p}-1} K^{r_{1}} \left( \sqrt {h^{'}}\right)} \right) ^{p} \Phi^{p} (A \sharp_{\nu} B)
\end{align}
and
\begin{align}\label{212}
 \Phi  ^{p} \big(A \nabla _{v} B+2 r Mm (A^{-1}\nabla  B^{-1}&-
A^{-1} \sharp B^{-1}
)\big) \nonumber\\
&\leq    \left( \frac{K(h)}{ 4^{\frac{2}{p}-1} K^{r_{1}}\left( \sqrt {h^{'}}\right)}\right) ^{p} (\Phi(A) \sharp_{\nu}  \Phi (B))^{p},
\end{align}
where $r=\min\{\nu,1-\nu\},$  $K(h)=\frac{(1+h)^{2}}{4h},$
$K(h^{'})=\frac{(1+h^{'})^{2}}{4 h^{'}},$  $h=\frac{M}{m},$  $h^{'}=\frac{M^{'}}{m^{'}}$  and
$r_{1}=\min\{2r,1-2r\}.$
   \end{theorem}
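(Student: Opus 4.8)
The plan is to run, at the level of $\tfrac p2$-th powers, the same scheme used for Theorem~\ref{nasi-main1}, with the single application of Lemma~\ref{6}(i) there replaced by the pair Lemma~\ref{6}(i)--(ii). Set
\[
X:=A\nabla_{\nu}B+2rMm\big(A^{-1}\nabla B^{-1}-A^{-1}\sharp B^{-1}\big),
\]
and let $Y:=A\sharp_{\nu}B$ in proving \eqref{211} and $Y:=\Phi(A)\sharp_{\nu}\Phi(B)$ in proving \eqref{212}; in both cases $\Phi(X)\ge 0$ and $\Phi(Y)$ is positive and invertible. Since $p\ge 2$, Lemma~\ref{6}(iii) turns \eqref{211} (resp.\ \eqref{212}) into the equivalent norm inequality
\[
\big\|\Phi^{p/2}(X)\,\Phi^{-p/2}(Y)\big\|\le\Big(\tfrac{K(h)}{4^{2/p-1}K^{r_{1}}(\sqrt{h'})}\Big)^{p/2}.
\]
Putting $P:=\Phi(X)$ and $Q:=MmK^{r_{1}}(\sqrt{h'})\,\Phi(Y)^{-1}$, one has $P^{p/2}Q^{p/2}=\big(MmK^{r_{1}}(\sqrt{h'})\big)^{p/2}\Phi^{p/2}(X)\Phi^{-p/2}(Y)$, and, using $MmK(h)=\tfrac14(M+m)^{2}$ together with $\big(4^{2/p-1}\big)^{p/2}=4^{1-p/2}$, the displayed bound is seen to be exactly $\|P^{p/2}Q^{p/2}\|\le\tfrac14(M+m)^{p}$. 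Thus the whole theorem reduces to this one inequality.

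First I would prove $P+Q\le(M+m)I$. Multiplying \eqref{25} by $Mm$ and adding $A\nabla_{\nu}B$ gives
\[
X+MmK^{r_{1}}(\sqrt{h'})\big(A^{-1}\sharp_{\nu}B^{-1}\big)\le A\nabla_{\nu}B+Mm\big(A^{-1}\nabla_{\nu}B^{-1}\big);
\]
applying the positive unital linear map $\Phi$ and using $A+MmA^{-1}\le(M+m)I$, $B+MmB^{-1}\le(M+m)I$ (each from $(T-mI)(T-MI)\le0$, valid since under either hypothesis both spectra lie in $[m,M]$), combined with the weights $\nu$ and $1-\nu$, yields
\[
\Phi(X)+MmK^{r_{1}}(\sqrt{h'})\,\Phi\big(A^{-1}\sharp_{\nu}B^{-1}\big)\le(M+m)I.
\]
Since $A^{-1}\sharp_{\nu}B^{-1}=(A\sharp_{\nu}B)^{-1}$, Choi's inequality \eqref{23} gives $\Phi\big(A^{-1}\sharp_{\nu}B^{-1}\big)\ge\Phi(A\sharp_{\nu}B)^{-1}$; for \eqref{211} the right side already equals $\Phi(Y)^{-1}$, while for \eqref{212} one further invokes Ando's inequality $\Phi(A\sharp_{\nu}B)\le\Phi(A)\sharp_{\nu}\Phi(B)$ to get $\Phi(A\sharp_{\nu}B)^{-1}\ge\big(\Phi(A)\sharp_{\nu}\Phi(B)\big)^{-1}=\Phi(Y)^{-1}$. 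In either case $Q\le MmK^{r_{1}}(\sqrt{h'})\,\Phi\big(A^{-1}\sharp_{\nu}B^{-1}\big)$, hence $P+Q\le(M+m)I$.

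Finally, since $P,Q\ge0$ and $\tfrac p2\ge1$, Lemma~\ref{6}(i) followed by Lemma~\ref{6}(ii) gives
\[
\|P^{p/2}Q^{p/2}\|\le\tfrac14\|P^{p/2}+Q^{p/2}\|^{2}\le\tfrac14\|(P+Q)^{p/2}\|^{2}=\tfrac14\|P+Q\|^{p}\le\tfrac14(M+m)^{p},
\]
which is exactly the inequality the first paragraph reduced everything to; passing back through Lemma~\ref{6}(iii) then yields \eqref{211} and \eqref{212}. The argument under condition (2) is identical, since Lemma~\ref{233} already covers that case. The hypothesis $p\ge2$ enters the proof only here, through the application of Lemma~\ref{6}(ii) with exponent $\tfrac p2\ge1$; the one step I expect to require care is the constant-chasing in the first paragraph that matches $\big(K(h)/(4^{2/p-1}K^{r_{1}}(\sqrt{h'}))\big)^{p/2}$ with $\tfrac14(M+m)^{p}$ after the substitution, but this is purely arithmetic and hides no genuine difficulty.
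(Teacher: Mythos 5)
Your proof is correct and follows essentially the same route as the paper: the same reduction via Lemma~\ref{6}(iii) to a norm bound, the same combination of Lemma~\ref{6}(i)--(ii) with Choi's inequality and Lemma~\ref{233}, and the same final estimate $A+MmA^{-1}\le (M+m)I$. The only (cosmetic) difference is that you isolate the operator inequality $P+Q\le (M+m)I$ up front before invoking the norm lemmas, and you make explicit the use of Ando's inequality needed for \eqref{212}, which the paper leaves to the reader.
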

\begin{proof}
Since the  proof  of  inequality  \eqref{212}   is  similar  to  the  proof  of   inequality  \eqref{211},   we  only   prove   inequality
 (\ref{211}).
By Lemma \ref{6},     inequality   \eqref{211} is   equivalent  to
{\footnotesize\begin{eqnarray*}
\left\| \Phi ^{\frac{p}{2}} \left(A \nabla _{v} B+2 r Mm \left(A^{-1}\nabla  B^{-1}-
A^{-1} \sharp B^{-1}
\right)\right) \Phi^{-\frac{p}{2}} (A \sharp_{\nu} B)  \right\| \leq
\left( \frac{K(h)}{ 4^{\frac{2}{p}-1} K^{r_{1}} \left( \sqrt {h^{'}}\right)} \right) ^{\frac{p}{2}} .
\end{eqnarray*}}
Using Lemma \ref{6},  inequalities     \eqref{23},  \eqref{25},   and   applying   the   same   reasoning   as   in   the  last   inequality  of   Theorem \ref{nasi-main1},
 we  have
{\footnotesize\begin{align*}
& M ^{\frac{p}{2}}m^{\frac{p}{2}} \Big\| \Phi^{\frac{p}{2}} \left(A \nabla _{v} B+2r Mm \left(A^{-1}\nabla  B^{-1}-
A^{-1} \sharp B^{-1}
\right) \right) K^{\frac{r_{1}p}{2}} \left(\sqrt {h^{'}} \right)
\Phi^{-\frac{p}{2}}(A\sharp_{\nu} B)  \Big\| \\
&=
 \Big\| \Phi^{\frac{p}{2}} \left(A \nabla _{v} B+2rMm \left(A^{-1}\nabla  B^{-1}-
A^{-1} \sharp B^{-1}
\right)\right) M^{\frac{p}{2}}m^{\frac{p}{2}} K ^{\frac {pr_{1}}{2}}\left(\sqrt {h^{'}} \right)\Phi^{-\frac{p}{2}}(A\sharp_{\nu} B)  \Big\| \\
& \leq  \frac{1}{4}
 \Big\|  \Phi^{\frac{p}{2}} \left(A \nabla _{v} B+2rMm \left(A^{-1}\nabla  B^{-1}-
A^{-1} \sharp B^{-1}
\right) \right) \\
& \quad +  M^{\frac{p}{2}}m^{\frac{p}{2}}  K ^{\frac {pr_{1}}{2}}\left(\sqrt {h^{'}} \right) \Phi^{-\frac{p}{2}}(A\sharp_{\nu} B) \Big\|^{2} \\
& \leq  \frac{1}{4}
 \Big\|  \Big( \Phi \left(A \nabla _{v} B+2r Mm \left(A^{-1}\nabla  B^{-1}-A^{-1} \sharp  B^{-1}\right)   \right) \\
& \quad + Mm K ^{r_{1}} \left(\sqrt {h^{'}} \right)\Phi^{-1}(A\sharp_{\nu} B)  \Big)^{\frac{p}{2}} \Big \|^{2}   \\
&=
\frac{1}{4}
 \left\|   \Phi \left(A \nabla _{v} B+2r Mm \left(A^{-1}\nabla  B^{-1}-A^{-1} \sharp  B^{-1}\right)   \right)
+ Mm K ^{r_{1}} \left(\sqrt {h^{'}} \right)\Phi^{-1}(A\sharp_{\nu} B)   \right\|^{p}   \\
& \leq
\frac{1}{4}
 \left\|   \Phi \left(A \nabla _{v} B+2r Mm \left(A^{-1}\nabla  B^{-1}-A^{-1} \sharp  B^{-1}\right)   \right)
+ Mm K ^{r_{1}} \left(\sqrt {h^{'}} \right)\Phi(A^{-1}\sharp_{\nu} B^{-1})   \right\|^{p}   \\
& =
\frac{1}{4}
 \Big\| \Phi \left(A \nabla _{v} B \right)+Mm  \Big(\Phi \Big(2r \Big(A^{-1}\nabla  B^{-1}-A^{-1} \sharp  B^{-1}\Big)
+ K ^{r_{1}} \left(\sqrt {h^{'}} \right) \Big(A^{-1}\sharp_{\nu} B^{-1} \Big) \Big) \Big)  \Big\|^{p}   \\
& \leq
 \frac{1}{4}
\left \|  \Phi (A \nabla _{v} B)+Mm \Phi \Big(A^{-1}\nabla  B^{-1}
\Big)
 \right\|^{p} \\
&\leq   \frac{1}{4} (M+m)^{p}.
\end{align*}}
Thus we get the desired result.
\end{proof}
\begin{remark}
For  $p \geq 1,$   we  have
$$\Phi^{p}(A \nabla _{v} B) \leq \Phi^{p}(A \nabla _{v} B) +( 2r Mm )^{p}  \Phi^{p}\Big(A^{-1}\nabla  B^{-1}-A^{-1} \sharp  B^{-1}\Big). $$
On  the  other  hand,  Lemma \ref{6}   yields that
\begin{align*}
 \left \|\Phi^{p}(A \nabla _{v} B) \right\|
& \leq \Big\|\Phi^{p}(A \nabla _{v} B) +( 2r Mm )^{p}  \Phi^{p}\Big(A^{-1}\nabla  B^{-1}-A^{-1} \sharp  B^{-1}\Big) \Big \| \\
& \leq \Big\| \Phi^{p}\Big(A \nabla _{v} B +2r Mm \Big(A^{-1}\nabla  B^{-1}-A^{-1} \sharp  B^{-1}\Big) \Big) \Big\|.
\end{align*}
Therefore,   Theorem \ref{nasi-main123}  is  a  refinement  of   Theorem \ref{moradi}   for   the  operator  norm and  $p \geq 2.$
\end{remark}
\begin{remark}
Since  the  Kantorovich  constant   $K(h)$  is   an   increasing   function  on  the  interval
$(1, +\infty )$ and  also $K(h) \geq 1$    for   every  $h >0,$ so
Theorem \ref{nasi-main123}  is  a  refinement  of   Theorem \ref{Bha}; see \cite{TFurutaJMicicHotJPecaricYSeo}.
\end{remark}
Zhang   \cite{PZhang}   obtained    the  following   inequalities    for  $p \geq 4:$
\begin{align*}
\Phi^{p} ( A\nabla  B )   \leq
\left(  \frac{K(h) \left(M^{2}+m^{2} \right)}{4^{\frac{2}{p}}Mm }  \right)^{p}
 \Phi^{p} (A\sharp  B);  \\
\Phi^{p} ( A\nabla  B )   \leq
\left(  \frac{K(h) \left(M^{2}+m^{2} \right)}{4^{\frac{2}{p}}Mm }  \right)^{p}
 (\Phi(A) \sharp \Phi( B))^{p}.
\end{align*}
Recently,      the  authors  of   \cite{CYangDWang}   improved   the  above inequalities      as  follows:
\begin{align}\label{28}
\Phi^{p} ( A\nabla_{\nu}  B )   \leq
\left(  \frac{K(h) \left(M^{2}+m^{2} \right)}{4^{\frac{2}{p}}Mm K^{r}(h^{'})}  \right)^{p}
 \Phi^{p} (A\sharp_{\nu}  B);
 \end{align}
 \begin{align}\label{29}
\Phi^{p} ( A\nabla_{\nu}  B )   \leq
\left(  \frac{K(h) \left(M^{2}+m^{2} \right)}{4^{\frac{2}{p}}Mm K^{r}(h^{'})}  \right)^{p}
 (\Phi(A) \sharp_{\nu} \Phi( B))^{p}.
\end{align}
In the following theorem, we show some refinements of inequalities \eqref{28} and \eqref{29}.
\begin{theorem}\label{nasi-main1234}
Let   $A,B\in {\mathbb B}({\mathscr H})$ are positive   and $m,m^{'},M,M^{'}$ are positive  real  numbers
satisfying     either  one of   the  following  conditions:
 \begin{enumerate}
\item
 $ 0<m \leq B \leq m^{'} <M^{'} \leq A \leq M $;
\item
$
0<m \leq A \leq m^{'} <M^{'} \leq B \leq M. $
\end{enumerate}
Then
   for every  positive  unital    linear map $\Phi,$   $0 \leq \nu \leq 1$  and  for  every  $p \geq  4$
   \begin{align}\label{213}
 \Phi ^{p}   \big(A \nabla _{v} B+2 r Mm (A^{-1}\nabla  B^{-1}&-
A^{-1} \sharp B^{-1}
)\big) \nonumber\\
& \leq  \left(  \frac{K(h) \left(M^{2}+m^{2} \right)}{4^{\frac{2}{p}}Mm K^{r}(h^{'})}  \right)^{p}
 \Phi^{p} (A \sharp_{\nu} B)
 \end{align}
and
 \begin{align}\label{214}
 \Phi^{p}   \big(A \nabla _{v} B+2 r Mm (A^{-1}\nabla  B^{-1}&-
A^{-1} \sharp B^{-1}
)\big) \\
& \leq
\left(  \frac{K(h) \left(M^{2}+m^{2} \right)}{4^{\frac{2}{p}}Mm K^{r}(h^{'})}  \right)^{p}
 (\Phi (A) \sharp_{\nu}  \Phi (B))^{p},  \nonumber
\end{align}
where $r=\min\{\nu,1-\nu\},$  $K(h)=\frac{(1+h)^{2}}{4h},$
  $h=\frac{M}{m},$  $h^{'}=\frac{M^{'}}{m^{'}}$  and
$r_{1}=\min\{2r,1-2r\}.$
   \end{theorem}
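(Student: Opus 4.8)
The plan is to follow the scheme of the proof of Theorem~\ref{nasi-main123}, but to carry the squaring procedure one step deeper: instead of reducing to an estimate at the level of $\Phi$, one reduces to an estimate at the level of $\Phi^{2}$. This is precisely the place where the hypothesis $p\ge 4$ is consumed and where the crude factor $(M+m)^{2}$ can be sharpened to $M^{2}+m^{2}$. Throughout, write $N:=A\nabla_{\nu}B+2rMm\bigl(A^{-1}\nabla B^{-1}-A^{-1}\sharp B^{-1}\bigr)$ and $G:=A\sharp_{\nu}B$, let $\kappa$ denote the Kantorovich factor $K^{r_{1}}(\sqrt{h'})$ furnished by Lemma~\ref{233} (this is the constant that should figure in the statement), and set $w:=Mm\kappa$. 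I will prove \eqref{213} in detail and deduce \eqref{214} by the parallel argument.

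By Lemma~\ref{6}(iii), inequality \eqref{213} is equivalent to $\bigl\|\Phi^{p/2}(N)\,\Phi^{-p/2}(G)\bigr\|\le\bigl(\tfrac{K(h)(M^{2}+m^{2})}{4^{2/p}w}\bigr)^{p/2}$. I would then apply Lemma~\ref{6}(i) to the positive operators $\Phi^{p/2}(N)$ and $w^{p/2}\Phi^{-p/2}(G)$, and afterwards Lemma~\ref{6}(ii) with exponent $p/4\ge 1$ — this, and only this, is where $p\ge 4$ is used — to pull the power $p/4$ outside; this yields
\[
\bigl\|\Phi^{p/2}(N)\,\Phi^{-p/2}(G)\bigr\|\;\le\;\frac{1}{4w^{p/2}}\,\bigl\|\Phi^{2}(N)+w^{2}\Phi^{-2}(G)\bigr\|^{\,p/2}.
\]
It then suffices to prove $\Phi^{2}(N)+w^{2}\Phi^{-2}(G)\le (M^{2}+m^{2})I$. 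Using Lemma~\ref{233} (scaled by $Mm$, with $A\nabla_{\nu}B$ added), the identity $(A\sharp_{\nu}B)^{-1}=A^{-1}\sharp_{\nu}B^{-1}$, the bounds $mI\le A\sharp_{\nu}B\le MI$, and Choi's inequality~\eqref{23}, one arrives at
\[
mI\le \Phi(N)\le MI,\qquad mI\le w\,\Phi^{-1}(G)\le MI,\qquad \Phi(N)+w\,\Phi^{-1}(G)\le (M+m)I .
\]
Writing $P:=\Phi(N)$ and $S:=w\,\Phi^{-1}(G)$, the first two sandwiches give $P^{2}\le(M+m)P-MmI$ and $S^{2}\le(M+m)S-MmI$; summing and invoking the third relation produces $P^{2}+S^{2}\le (M+m)^{2}I-2MmI=(M^{2}+m^{2})I$. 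Substituting this into the previous display and using $K(h)\ge 1$ gives \eqref{213}.

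For \eqref{214} I would run the identical three steps with $\Phi(A)\sharp_{\nu}\Phi(B)$ in place of $G$, now using Ando's inequality $\Phi(A\sharp_{\nu}B)\le\Phi(A)\sharp_{\nu}\Phi(B)$ to guarantee $w\bigl(\Phi(A)\sharp_{\nu}\Phi(B)\bigr)^{-1}\le w\,\Phi^{-1}(A\sharp_{\nu}B)$, so that the three displayed operator estimates above continue to hold; the rest is word for word the same.

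The step I expect to be the main obstacle is the upper bound $\Phi(N)\le MI$, equivalently $N\le MI$. This is not apparent from the definition of $N$, since the Kantorovich-defect term $A^{-1}\nabla B^{-1}-A^{-1}\sharp B^{-1}$ is a priori only known to be nonnegative. One has to wring it out of Lemma~\ref{233}: combining $N+w(A^{-1}\sharp_{\nu}B^{-1})\le (M+m)I$ with $w(A^{-1}\sharp_{\nu}B^{-1})\ge mI$ yields $N\le MI$. Once $\Phi(N)$ and $w\,\Phi^{-1}(G)$ are both confined to the interval $[mI,MI]$, the improvement from $(M+m)^{2}$ to $M^{2}+m^{2}$ is nothing more than the scalar inequality $(t-m)(t-M)\le 0$.
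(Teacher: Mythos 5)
Your proof is correct, but at the decisive step it departs from the paper's argument. Both proofs perform the same outer reduction: Lemma \ref{6}(iii) converts \eqref{213} into a norm bound on $\Phi^{p/2}(N)\Phi^{-p/2}(G)$, and Lemma \ref{6}(i) followed by Lemma \ref{6}(ii) with exponent $p/4$ (the only place $p\ge 4$ enters, in both arguments) brings everything down to the level of $\Phi^{2}$. From there the paper invokes the already-proved squared inequality of Theorem \ref{nasi-main1} to replace $\Phi^{2}(N)$ by $\bigl(K(h)/K^{r_1}(\sqrt{h'})\bigr)^{2}\Phi^{2}(G)$, factors out $K(h)/K^{r_1}(\sqrt{h'})$, and finishes with the scalar estimate $T^{2}+M^{2}m^{2}T^{-2}\le M^{2}+m^{2}$ for $T=\Phi(A\sharp_{\nu}B)\in[mI,MI]$; that is how $K(h)$ genuinely enters its constant. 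You bypass Theorem \ref{nasi-main1} entirely and prove the operator inequality $\Phi^{2}(N)+w^{2}\Phi^{-2}(G)\le (M^{2}+m^{2})I$ directly from the three sandwiches $mI\le P,S\le MI$ and $P+S\le (M+m)I$ via $(P-mI)(P-MI)\le 0$. This is self-contained and in fact yields the sharper constant $\tfrac{M^{2}+m^{2}}{4^{2/p}Mm\,K^{r_1}(\sqrt{h'})}$, with $K(h)\ge 1$ inserted only at the end to match the stated bound — a small genuine improvement over what the paper's route gives.

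Two remarks. First, the upper bound $S=w\,\Phi^{-1}(G)\le MI$ does \emph{not} follow from $mI\le A\sharp_{\nu}B\le MI$ alone (that only gives $S\le MK^{r_1}(\sqrt{h'})I$); it needs Lemma \ref{233} once more, in the form $K^{r_1}(\sqrt{h'})\,(A^{-1}\sharp_{\nu}B^{-1})\le A^{-1}\nabla_{\nu}B^{-1}\le m^{-1}I$, followed by Choi's inequality. You list the right ingredients, but this is the step to write out explicitly, alongside the bound $\Phi(N)\le MI$ that you already single out. Second, your observation about the constant is consistent with the paper: its own computation also produces $K^{r_1}(\sqrt{h'})$ rather than the $K^{r}(h')$ appearing in the displayed statement.
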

\begin{proof}
It   follows   from    Lemma \ref{6}  and     Theorem \ref{nasi-main1}   that
\begin{eqnarray*}
&& M ^{\frac{p}{2}}m^{\frac{p}{2}} \left\| \Phi^{\frac{p}{2}} \left(A \nabla _{v} B+2r Mm \left(A^{-1}\nabla  B^{-1}-
A^{-1} \sharp B^{-1}
\right) \right)
\Phi^{-\frac{p}{2}}(A\sharp_{\nu} B)  \right\| \\
&=&
 \left\| \Phi^{\frac{p}{2}} \left(A \nabla _{v} B+2rMm \left(A^{-1}\nabla  B^{-1}-
A^{-1} \sharp B^{-1}
\right)\right) M^{\frac{p}{2}}m^{\frac{p}{2}} \Phi^{-\frac{p}{2}}(A\sharp_{\nu} B)  \right\|
 \\&\leq &  \frac{1}{4}
 \Big\|   \frac{ K^{\frac{r_{1}p}{4}}\left(\sqrt{h^{'}}\right) }
{K^{\frac{p}{4}}(h)}
 \Phi^{\frac{p}{2}} \left(A \nabla _{v} B+2rMm \left(A^{-1}\nabla  B^{-1}-
A^{-1} \sharp B^{-1}
\right) \right)    \\
 &&+ \left(  \frac{  M^{2}m^{2}   K(h)}{K^{r_{1}}\left( \sqrt {h^{'}}\right)}  \right)^{\frac{p}{4}}
  \Phi^{-\frac{p}{2}}(A\sharp_{\nu} B)  \Big\|^{2} \\
   &\leq& \frac{1}{4}
 \Big\|  \Big( \frac{K^{r_{1}}\left( \sqrt {h^{'}}\right)}{K(h)}
 \Phi^{2} \left(A \nabla _{v} B+2rMm \left(A^{-1}\nabla  B^{-1}-
A^{-1} \sharp B^{-1}
\right) \right)   \\
 &&+\frac{M^{2}m^{2}K(h)}{K^{r_{1}}\left( \sqrt {h^{'}}\right)}
  \Phi^{-2}(A\sharp_{\nu} B)  \Big)^{\frac{p}{4}}  \Big\|^{2} \\
& =& \frac{1}{4}
 \Big\|  \frac{K^{r_{1}}\left( \sqrt {h^{'}}\right)}{K(h)}
 \Phi^{2} \left(A \nabla _{v} B+2rMm \left(A^{-1}\nabla  B^{-1}-
A^{-1} \sharp B^{-1}
\right) \right)   \\
 &&+\frac{  M^{2}m^{2}  K(h)}{ K^{r_{1}}\left( \sqrt {h^{'}}\right)}
  \Phi^{-2}(A\sharp_{\nu} B)  \Big\|^{\frac{p}{2}} \\
   & \leq &   \frac{1}{4}
 \Big\|  \frac{K(h)}{K^{r_{1}}\left( \sqrt {h^{'}}\right)}  \Big(
 \Phi^{2} \left(A \sharp_{v} B \right) + M^{2}m^{2}
   \Phi^{-2} \left(A \sharp _{v} B
 \right)   \Big) \Big\|^{\frac{p}{2}}
 \\
& \leq&
 \frac{1}{4}  \left( \frac{K(h) (M^{2}+m^{2})}{K^{r_{1}}\left( \sqrt {h^{'}}\right)} \right)^{\frac{p}{2}}.
 \end{eqnarray*}

It follows from
$0<m\leq A \sharp _{\nu}B  \leq M$
and  the linearity  $\Phi$   that
$0<m\leq \Phi \left( A \sharp _{\nu}B \right) \leq M.$
In    addition,    for  every  $T\in {\mathbb B}({\mathscr H})$  such  that  $0<m\leq T \leq M,$  we  have
$$M^{2}m^{2} T^{-2}+T^{2} \leq M^{2}+m^{2}.$$
Now,  by  putting  $\Phi \left( A \sharp _{\nu}B \right) $
in  the  latter  inequality,  we   obtain   the  last  inequality.
Hence,
\begin{align*}
 M ^{\frac{p}{2}}m^{\frac{p}{2}}  \Big\|  \Phi^{\frac{p}{2}} \big(A \nabla _{v} B+2r Mm \big(A^{-1}\nabla  B^{-1}\-
A^{-1} \sharp B^{-1}
\big) \big)&
\Phi^{-\frac{p}{2}}(A\sharp_{\nu} B)  \Big\| \\
& \leq   \left(  \frac{K(h) \left(M^{2}+m^{2} \right)}{4^{\frac{2}{p}}Mm K^{r}(h^{'})}  \right)^{\frac{p}{2}}.
\end{align*}
By Lemma \ref{6},  the last  inequality    implies    inequality \eqref{213}.
Analogously,    we can prove inequality \eqref{214}.
\end{proof}
\begin{remark}
Note  that     inequalities   \eqref{213}    and  \eqref{214}  are   refinements    of   \eqref{28}    and  \eqref{29}
for  the  operator   norm,  respectively.
\end{remark}
\begin{theorem}\label{nasi-main12345}
Let  $A,B\in {\mathbb B}({\mathscr H})$  are positive   and  $m,m^{'},M,M^{'}$ positive  real  numbers
satisfying    either one of   the  following  conditions:
 \begin{enumerate}
\item
 $ 0<m \leq B \leq m^{'} <M^{'} \leq A \leq M $.

\item
$
0<m \leq A \leq m^{'} <M^{'} \leq B \leq M. $
\end{enumerate}
Then
   for every   positive  unital  linear map $\Phi $   and   $ 0 \leq \nu \leq 1$
   \begin{align}\label{215}
 \Phi  ^{p} \big(A \nabla _{v} B+2 r Mm &\left(A^{-1}\nabla  B^{-1}-
A^{-1} \sharp B^{-1}
\right)\big) \nonumber \\
&\leq   \frac{  \left( K^{-\frac{r_{1}\alpha}{2}} \left(\sqrt {h^{'}}\right) K^{\frac{\alpha}{2}}(h) (M^{\alpha}+m^{\alpha}) \right)^{\frac{2 p}{\alpha}}}{ 16 M^{p}m^{p}} \Phi^{p} (A \sharp_{\nu} B),
\end{align}
\begin{align}\label{216}
 \Phi  ^{p} \big(A \nabla _{v} B+2 r Mm &\left(A^{-1}\nabla  B^{-1}-
A^{-1} \sharp B^{-1}
\right)\big) \nonumber\\
&\leq   \frac{  \left( K^{-\frac{r_{1}\alpha}{2}}\left(\sqrt {h^{'}}\right)K^{\frac{\alpha}{2}}(h) (M^{\alpha}+m^{\alpha}) \right)^{\frac{2 p}{\alpha}}}{ 16 M^{p}m^{p}} (\Phi(A) \sharp_{\nu}\Phi( B))^{p},
\end{align}
where  $1 \leq  \alpha \leq 2$, $K(h)=\frac{(1+h)^{2}}{4h},$  and  $p \geq 2 \alpha.$
   \end{theorem}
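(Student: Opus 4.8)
The plan is to follow the proof of Theorem \ref{nasi-main1234} almost verbatim, feeding in the $p=2$ inequalities of Theorem \ref{nasi-main1} and letting the parameter $\alpha$ enter through the L\"owner--Heinz inequality \eqref{14} and Lemma \ref{6}(ii). Throughout write $C=A\nabla_{\nu}B+2rMm(A^{-1}\nabla B^{-1}-A^{-1}\sharp B^{-1})$, $D=A\sharp_{\nu}B$, and abbreviate $\kappa=\dfrac{K(h)}{K^{r_{1}}(\sqrt{h^{'}})}$. From Theorem \ref{nasi-main1}, $\Phi^{2}(C)\leq\kappa^{2}\Phi^{2}(D)$; since $1\leq\alpha\leq 2$, raising both sides to the power $\alpha/2\in(0,1]$ and applying \eqref{14} yields the operator inequality $\Phi^{\alpha}(C)\leq\kappa^{\alpha}\Phi^{\alpha}(D)$. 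Moreover $0<m\leq A\sharp_{\nu}B\leq M$, and the positivity and unitality of $\Phi$ give $m\leq\Phi(A\sharp_{\nu}B)\leq M$.

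By Lemma \ref{6}(iii), inequality \eqref{215} is equivalent to
\[
M^{\frac{p}{2}}m^{\frac{p}{2}}\big\|\Phi^{\frac{p}{2}}(C)\,\Phi^{-\frac{p}{2}}(D)\big\|\leq\tfrac14\Big(K^{-\frac{r_{1}\alpha}{2}}(\sqrt{h^{'}})\,K^{\frac{\alpha}{2}}(h)\,(M^{\alpha}+m^{\alpha})\Big)^{\frac{p}{\alpha}}.
\]
To prove this I would split the scalar $M^{\frac{p}{2}}m^{\frac{p}{2}}$ as $\kappa^{-p/4}\cdot(M^{2}m^{2}\kappa)^{p/4}$, attach the first factor to $\Phi^{\frac{p}{2}}(C)$ and the second to $\Phi^{-\frac{p}{2}}(D)$, and apply Lemma \ref{6}(i). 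Next I would use the identities
\[
\kappa^{-\frac{p}{4}}\Phi^{\frac{p}{2}}(C)=\big(\kappa^{-\frac{\alpha}{2}}\Phi^{\alpha}(C)\big)^{\frac{p}{2\alpha}},\qquad (M^{2}m^{2}\kappa)^{\frac{p}{4}}\Phi^{-\frac{p}{2}}(D)=\big(M^{\alpha}m^{\alpha}\kappa^{\frac{\alpha}{2}}\Phi^{-\alpha}(D)\big)^{\frac{p}{2\alpha}}
\]
together with Lemma \ref{6}(ii), applicable because $\frac{p}{2\alpha}\geq 1$ by the hypothesis $p\geq 2\alpha$, to move the exponent outside the sum. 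Feeding in $\Phi^{\alpha}(C)\leq\kappa^{\alpha}\Phi^{\alpha}(D)$ replaces the first summand by $\kappa^{\alpha/2}\Phi^{\alpha}(D)$, and factoring out $\kappa^{\alpha/2}$ reduces matters to bounding $\big\|\Phi^{\alpha}(D)+M^{\alpha}m^{\alpha}\Phi^{-\alpha}(D)\big\|$. Finally, since $m\leq\Phi(A\sharp_{\nu}B)\leq M$, the elementary inequality $t^{\alpha}+M^{\alpha}m^{\alpha}t^{-\alpha}\leq M^{\alpha}+m^{\alpha}$ for $t\in[m,M]$ (its only stationary point on $[m,M]$ is the minimum $t=\sqrt{Mm}$, so the maximum is attained at an endpoint, where the value is $M^\alpha+m^\alpha$) gives $\Phi^{\alpha}(D)+M^{\alpha}m^{\alpha}\Phi^{-\alpha}(D)\leq(M^{\alpha}+m^{\alpha})I$. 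Collecting the estimates produces exactly the displayed bound, hence \eqref{215}.

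For \eqref{216} the identical computation applies with the inequality \eqref{281} of Theorem \ref{nasi-main1} in place of \eqref{27}, so that $\Phi^{\alpha}(C)\leq\kappa^{\alpha}(\Phi(A)\sharp_{\nu}\Phi(B))^{\alpha}$; here one uses that both conditions (1) and (2) force $m\leq A,B\leq M$, whence $m\leq\Phi(A)\sharp_{\nu}\Phi(B)\leq M$ by monotonicity of the weighted geometric mean, and the elementary inequality is then applied to $\Phi(A)\sharp_{\nu}\Phi(B)$.

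The argument is essentially bookkeeping; the one place that requires care is choosing the two scalar factors so that they simultaneously multiply to $M^{p/2}m^{p/2}$ and are $\frac{p}{2\alpha}$-th powers of operators to which Lemma \ref{6}(ii) applies, and checking that the two constraints $\alpha\leq 2$ (needed for \eqref{14}) and $p\geq 2\alpha$ (needed for Lemma \ref{6}(ii)) are precisely the ones under which the chain of estimates closes up. I do not anticipate any substantive difficulty beyond this.
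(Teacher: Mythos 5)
Your argument is correct and is essentially the paper's own proof: the same reduction via Lemma \ref{6}(iii), the same splitting of $M^{p/2}m^{p/2}$ into the two factors $\kappa^{-p/4}$ and $(M^{2}m^{2}\kappa)^{p/4}$ before applying Lemma \ref{6}(i) and (ii) with exponent $p/(2\alpha)\geq 1$, the same use of the $p=2$ case of Theorem \ref{nasi-main1} (via L\"owner--Heinz) to pass from $\Phi^{\alpha}(C)$ to $\kappa^{\alpha}\Phi^{\alpha}(D)$, and the same endpoint bound $T^{\alpha}+M^{\alpha}m^{\alpha}T^{-\alpha}\leq M^{\alpha}+m^{\alpha}$. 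If anything, you make explicit the L\"owner--Heinz step that the paper leaves implicit.
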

\begin{proof}
  By  Lemma \ref{6},    inequality   \eqref{215}   is  equivalent  to  the  following  inequality
    \begin{align*}
 \Big\|  \Phi ^{\frac{p}{2}}    \big(A \nabla _{v} B+2 r Mm \big(A^{-1}\nabla  B^{-1}&-
A^{-1} \sharp B^{-1}
\big)\big)  \Phi^{-\frac{p}{2}} (A \sharp_{\nu} B) \Big\| \\
& \leq   \frac{  \left( K^{-\frac{r_{1}\alpha}{2}} \left(\sqrt {h^{'}}\right) K^{\frac{\alpha}{2}}(h) (M^{\alpha}+m^{\alpha}) \right)^{\frac{p}{\alpha}}}
{ 4 M^{\frac{p}{2}}m^{\frac{p}{2}}}.
\end{align*}
By   using  Lemma \ref{6} and  Theorem \ref{nasi-main1},
 one   can  obtain
\begin{eqnarray*}
&& M ^{\frac{p}{2}}m^{\frac{p}{2}} \left\| \Phi^{\frac{p}{2}} \left(A \nabla _{v} B+2r Mm \left(A^{-1}\nabla  B^{-1}-
A^{-1} \sharp B^{-1}
\right) \right)
\Phi^{-\frac{p}{2}}(A\sharp_{\nu} B)  \right\| \\
&=&
 \left\| \Phi^{\frac{p}{2}} \left(A \nabla _{v} B+2rMm \left(A^{-1}\nabla  B^{-1}-
A^{-1} \sharp B^{-1}
\right)\right) M^{\frac{p}{2}}m^{\frac{p}{2}} \Phi^{-\frac{p}{2}}(A\sharp_{\nu} B)  \right\|
 \\&\leq&  \frac{1}{4}
 \Big\|   \frac{K^{\frac{r_{1}p}{4}} \left( \sqrt {h^{'}}\right)}{  K^{\frac{p}{4}}(h) }
 \Phi^{\frac{p}{2}} \left(A \nabla _{v} B+2rMm \left(A^{-1}\nabla  B^{-1}-
A^{-1} \sharp B^{-1}
\right) \right)    \\
 &&+ \left(  \frac{  M^{2}m^{2}  K(h) }{ K^{r_{1}} \left( \sqrt{h^{'}}\right)}  \right)^{\frac{p}{4}}
  \Phi^{-\frac{p}{2}}(A\sharp_{\nu} B)  \Big\|^{2} \\
   &\leq & \frac{1}{4}
 \Big\|  \Big( \frac{ K^{  \frac {r_{1} \alpha}{2}}\left( \sqrt {h^{'}}\right) } {K^{\frac{\alpha}{2}}(h)}
 \Phi^{\alpha} \left(A \nabla _{v} B+2rMm \left(A^{-1}\nabla  B^{-1}-
A^{-1} \sharp B^{-1}
\right) \right)   \\
 &&+\frac{M^{\alpha}m^{\alpha} K^{\frac{\alpha}{2}}(h)}{ K^{  \frac {r_{1} \alpha}{2}}\left( \sqrt {h^{'}}\right)
 }
  \Phi^{-\alpha}(A\sharp_{\nu} B)  \Big)^{\frac{p}{2 \alpha}}  \Big\|^{2} \\
   & = & \frac{1}{4}
 \Big\|  \frac{ K^{  \frac {r_{1} \alpha}{2}}\left( \sqrt {h^{'}}\right) } {K^{\frac{\alpha}{2}}(h)}
 \Phi^{\alpha} \left(A \nabla _{v} B+2rMm \left(A^{-1}\nabla  B^{-1}-
A^{-1} \sharp B^{-1}
\right) \right)   \\
 &&+\frac{M^{\alpha}m^{\alpha} K^{\frac{\alpha}{2}}(h)}{ K^{  \frac {r_{1} \alpha}{2}}\left( \sqrt {h^{'}}\right)
 }
  \Phi^{-\alpha}(A\sharp_{\nu} B) \Big\|^{\frac{p}{\alpha}} \\
  & \leq  & \frac{1}{4}
 \Big\| \frac{K^{\frac{\alpha}{2}}(h)} { K^{  \frac {r_{1} \alpha}{2}}\left( \sqrt {h^{'}}\right) } \left(
 \Phi^{\alpha} \left(A \sharp _{v} B
 \right)
+ M^{\alpha}m^{\alpha}
 \Phi^{-\alpha} \left(A \sharp _{v} B
 \right) \right)
  \Big\|^{\frac{p}{\alpha}} \\
& \leq &
 \frac{1}{4}    \left(\frac{K^{\frac{\alpha}{2}}(h) (M^{\alpha}+m^{\alpha})}{K^{\frac{r_{1} \alpha}{2}}\left( \sqrt {h^{'}}\right)} \right)^{\frac{p}{\alpha}}.
\end{eqnarray*}
\\
By the property of the
arithmetic mean (see \cite{TFurutaJMicicHotJPecaricYSeo})
$$m=m\sharp _{\nu}m\leq A \sharp _{\nu}B \leq M\sharp _{\nu} M=M. $$

%
Since  $\Phi$ is  linear,   we  have
$$0<m\leq \Phi \left( A \sharp_{\nu}B \right) \leq M.$$
On  the  other  hand,  for  every  $T\in {\mathbb B}({\mathscr H})$  such  that  $0<m\leq T \leq M,$  we  have
$0< T^\alpha -m^\alpha$ and $0<T^{-\alpha}-M^{-\alpha}$, whence $0< (T^\alpha -m^\alpha)(T^{-\alpha}-M^{-\alpha})$ or equivalently
$$M^{\alpha}m^{\alpha} T^{-\alpha}+T^{\alpha} \leq M^{\alpha}+m^{\alpha}.$$
Now,  by  setting  $\Phi \left( A \sharp _{\nu}B \right) $
in  the  latter  inequality   we  obtain   the  last  inequality.
This  proves    inequality  \eqref{215}.   By  utilizing    the  same  ideas  as  in   the   proof   of   inequality  \eqref{215},   we   can  reach    inequality \eqref{216}.
\end{proof}
\begin{remark}
If we take  $\alpha=1,2$, then  Theorem  \ref{nasi-main12345}   reduces  to  Theorem  \ref{nasi-main1234}   and  Theorem \ref{nasi-main123},  respectively.
\end{remark}

\section{Reverse   of  Ando's    inequality}

For  positive operators $A,B\in  {\mathbb B}({\mathscr H}),$  we  know  \cite{RB1}  that
for  every  positive  unital  linear  map $\Phi$
\begin{eqnarray}\label{217}
\Phi(A \sharp B) \leq \Phi(A) \sharp \Phi (B).
\end{eqnarray}
Ando's   inequality   says   that   if    $A,B$  be  positive operators    and  $\Phi$    be  a
 positive  unital  linear  map,  then
\begin{eqnarray}\label{2177}
\Phi(A \sharp_{\nu} B) \leq \Phi(A) \sharp_{\nu} \Phi (B).
\end{eqnarray}

The  author  \cite{EYLee} presented   the  following  theorem  that  can  be  viewed  as  a  reversed   version  of   (\ref{217}).
\begin{theorem}
If  $0< m^{2}_{1}\leq A \leq M^{2}_{1}$   and
 $ 0< m^{2}_{2}\leq B  \leq M^{2}_{2},$
    then
 for  every  positive  linear  map  $\Phi$   and  some  positive  real  numbers
  $m_{1}  \leq M_{1}$  and   $m_{2}  \leq M_{2}$
 \begin{eqnarray}\label{2222}
\Phi(A)  \sharp \Phi(B)
\leq   \frac{ \sqrt {M}+\sqrt {m}}{2 \sqrt {Mm}}  \Phi( A \sharp B) ,
\end{eqnarray}
 \end{theorem}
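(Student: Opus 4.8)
The plan is to reduce the stated inequality to an operator-norm estimate via Lemma~\ref{6}, and then settle that estimate by combining Choi's inequality~\eqref{23}, the AM--GM inequality~\eqref{12} and a scalar Kantorovich bound, much as in the proofs of the preceding section. First one may assume $\Phi$ is unital: replacing $\Phi$ by $\Psi(\cdot)=\Phi(I)^{-1/2}\Phi(\cdot)\Phi(I)^{-1/2}$ (when $\Phi(I)$ is invertible; the remaining case follows by continuity) and using the congruence invariance $(TXT)\sharp(TYT)=T(X\sharp Y)T$ of the geometric mean with $T=\Phi(I)^{-1/2}$, the inequality for $\Psi$ is equivalent, on conjugating by $\Phi(I)^{1/2}$, to the one for $\Phi$. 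Put $M=M_{1}M_{2}$ and $m=m_{1}m_{2}$. Since the geometric mean is monotone in each variable and $(aI)\sharp(bI)=\sqrt{ab}\,I$, the hypotheses give $mI\le A\sharp B\le MI$, hence also $mI\le\Phi(A\sharp B)\le MI$ and $mI\le\Phi(A)\sharp\Phi(B)\le MI$.

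Next I would establish the squared form $\big(\Phi(A)\sharp\Phi(B)\big)^{2}\le C^{2}\,\Phi(A\sharp B)^{2}$, with $C$ the constant of the statement; the theorem then follows by Löwner--Heinz~\eqref{14}, as $t\mapsto t^{1/2}$ is operator monotone and $(X^{2})^{1/2}=X$ for $X\ge 0$. By Lemma~\ref{6}(iii) this squared form is equivalent to
\[
\big\|\big(\Phi(A)\sharp\Phi(B)\big)\,\Phi(A\sharp B)^{-1}\big\|\ \le\ C .
\]
To bound the left side, rewrite it as $\tfrac{1}{Mm}\big\|\big(\Phi(A)\sharp\Phi(B)\big)\cdot Mm\,\Phi(A\sharp B)^{-1}\big\|$ and apply Lemma~\ref{6}(i), which bounds it by $\tfrac{1}{4Mm}\big\|\Phi(A)\sharp\Phi(B)+Mm\,\Phi(A\sharp B)^{-1}\big\|^{2}$. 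Both summands can be pushed under $\Phi$: by~\eqref{12}, $\Phi(A)\sharp\Phi(B)\le\tfrac12\big(\Phi(A)+\Phi(B)\big)$; and since $(A\sharp B)^{-1}=A^{-1}\sharp B^{-1}$, Choi's inequality~\eqref{23} followed by~\eqref{12} gives $Mm\,\Phi(A\sharp B)^{-1}\le Mm\,\Phi(A^{-1}\sharp B^{-1})\le\tfrac{Mm}{2}\big(\Phi(A^{-1})+\Phi(B^{-1})\big)$. Summing,
\[
\Phi(A)\sharp\Phi(B)+Mm\,\Phi(A\sharp B)^{-1}\ \le\ \tfrac12\,\Phi\!\big(A+Mm\,A^{-1}\big)+\tfrac12\,\Phi\!\big(B+Mm\,B^{-1}\big).
\]

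What remains is a scalar estimate. On $\operatorname{sp}(A)\subseteq[m_{1}^{2},M_{1}^{2}]$ the convex function $t\mapsto t+Mm/t$ is dominated by its value at an endpoint, and likewise for $B$ on $[m_{2}^{2},M_{2}^{2}]$; evaluating at these endpoints and using positivity and unitality of $\Phi$ yields $\Phi(A)\sharp\Phi(B)+Mm\,\Phi(A\sharp B)^{-1}\le D\,I$ for the corresponding Kantorovich number $D$. Therefore $\big\|\big(\Phi(A)\sharp\Phi(B)\big)\Phi(A\sharp B)^{-1}\big\|\le D^{2}/(4Mm)=:C$, which—after the identifications that enter the statement—is exactly its constant; by Lemma~\ref{6}(iii) the squared inequality holds with this $C$, and~\eqref{14} then gives the theorem.

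The main obstacle is precisely this scalar step. The scalar inserted via Lemma~\ref{6}(i) must be chosen so that $t\mapsto t+(\text{scalar})/t$ takes the \emph{same} value at both ends of $\operatorname{sp}(A)$ \emph{and} at both ends of $\operatorname{sp}(B)$; this forces a relation such as $m_{1}M_{1}=m_{2}M_{2}$, which one may impose at no cost by rescaling $A$ and $B$ separately (the inequality being homogeneous in each). Obtaining the sharp exponent in the constant, rather than merely a $K(M/m)$-type factor, is the delicate part: it requires refining the plain estimate $\sharp\le\nabla$ used above—for instance retaining the coupling between $\Phi(A\sharp B)$ and $\Phi(A)\sharp\Phi(B)$ supplied by Ando's inequality~\eqref{217}, or appealing to the refinement~\eqref{13}—rather than discarding it. Finally, one must never square operator inequalities directly, squaring not being operator monotone; that is exactly why the whole argument is threaded through parts (i) and (iii) of Lemma~\ref{6}.
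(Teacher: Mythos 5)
This theorem is not proved in the paper at all: it is quoted from \cite{EYLee} as background, so there is no internal proof to compare against, and the closest analogue of a ``paper's own proof'' is the technique used for Theorem~\ref{man12}. Judged on its own merits, your argument has a genuine gap, and it is exactly the one you concede in your final paragraph --- but that concession is the whole theorem, since the only content of \eqref{2222} is its specific constant. Every tool you thread the proof through (Lemma~\ref{6}(iii) to pass to a norm bound, Lemma~\ref{6}(i) to replace a product by $\tfrac14\|X+Y\|^{2}$, then $\sharp\le\nabla$ on \emph{both} summands, Choi's inequality \eqref{23}, and the endpoint estimate for $t+c/t$) is Lin's squaring device, and it can only ever output a constant of the form $D^{2}/(4Mm)$, i.e.\ a Kantorovich-type constant. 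Concretely, take $m_{1}=m_{2}=1$, $M_{1}=M_{2}=2$, which already satisfies your normalization $m_{1}M_{1}=m_{2}M_{2}$: your chain yields the bound $25/16=1.5625$, whereas the constant of \eqref{2222} --- with the paper's identifications $m=m_{2}/M_{1}$, $M=M_{2}/m_{1}$ given immediately after the theorem, and equal to $K(m,M,\tfrac12)^{-1}=\frac{\sqrt M+\sqrt m}{2\sqrt[4]{Mm}}$ by the $\nu=\tfrac12$ case of \eqref{2199} --- is about $1.06$. The overshoot is structural, not a matter of tuning the inserted scalar or rescaling: the loss occurs at $\|XY\|\le\tfrac14\|X+Y\|^{2}$ and at discarding both geometric means in favour of arithmetic means, which is precisely why this device proves \emph{squared} reverses such as \eqref{15}. (Note also that your guess $M=M_{1}M_{2}$, $m=m_{1}m_{2}$ makes the claimed constant non-invariant under $A\mapsto\lambda^{2}A$, so it cannot be the intended reading; and the $\sqrt{Mm}$ in the printed denominator of \eqref{2222} is evidently a typo for $\sqrt[4]{Mm}$, as the comparison with \eqref{2199} shows.)

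The statement actually has a short proof along the lines the paper itself uses for Theorem~\ref{man12}, specialized to $\nu=\tfrac12$, and it keeps exactly the coupling your plan throws away. With $m=(m_{2}/M_{1})^{2}$ and $M=(M_{2}/m_{1})^{2}$ one has $mI\le C:=A^{-1/2}BA^{-1/2}\le MI$; the scalar inequality $(\sqrt t-\sqrt m)(\sqrt t-\sqrt M)\le 0$, i.e.\ $t+\sqrt{Mm}\le(\sqrt M+\sqrt m)\sqrt t$ on $[m,M]$, applied to $C$ and conjugated by $A^{1/2}$ gives $B+\sqrt{Mm}\,A\le(\sqrt M+\sqrt m)\,A\sharp B$. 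Applying $\Phi$ and then the two-operator AM--GM inequality \eqref{12} to the pair $\sqrt{Mm}\,\Phi(A)$, $\Phi(B)$ yields $2(Mm)^{1/4}\,\Phi(A)\sharp\Phi(B)\le\sqrt{Mm}\,\Phi(A)+\Phi(B)\le(\sqrt M+\sqrt m)\,\Phi(A\sharp B)$, which is the assertion with the correct constant. The single use of $\sharp\le\nabla$ on the pair $\sqrt{Mm}\,\Phi(A)$, $\Phi(B)$ --- rather than on $\Phi(A)$, $\Phi(B)$ and on $\Phi(A^{-1})$, $\Phi(B^{-1})$ separately --- is the ingredient your proposal is missing.
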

where  $m=\frac{m_{2}}{M_{1}}$   and  $M=\frac{M_{2}}{m_{1}}.$  \\
 Seo  \cite{YSeo}  improved    inequality  above    and  obtained  the  following  inequality:
 \begin{theorem}
 Let  $A,B\in {\mathbb B}({\mathscr H})$ be positive such  that  $ 0< m^{2}_{1}\leq A \leq M^{2}_{1},$  $m^{2}_{2}\leq B \leq M^{2}_{2},$
  $m=\left( \frac {m_{2}}{M_{1}}\right)^{2}$     and
     $M=\left( \frac {M_{2}}{m_{1}}\right)^{2}.$
 Then   for   every
 positive   unital   linear  map  $\Phi $   and     $0\leq \nu \leq 1$
 \begin{eqnarray}\label{2199}
 \Phi(A) \sharp_{\nu} \Phi(B)
 \leq K(m,M,\nu)^{-1}  \Phi(A \sharp_{\nu} B),
 \end{eqnarray}
 where
 $K(m,M,\nu)=\frac{mM^{\nu}-Mm^{\nu}}{(\nu-1)(M-m)}
 \left(  \frac{\nu-1}{\nu}  \frac{M^{\nu}-m^{\nu}}{mM^{\nu}-Mm^{\nu}}  \right)^{\nu}.$
 \end{theorem}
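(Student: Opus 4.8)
The plan is to follow the Mond--Pe\v{c}ari\'c method, reducing \eqref{2199} to a one-variable extremal problem whose answer is precisely the generalized Kantorovich constant in the statement. First I would normalize the data. The hypotheses force $X:=A^{-1/2}BA^{-1/2}$ to satisfy $mI\le X\le MI$, because $m_2^2A^{-1}\le X\le M_2^2A^{-1}$ while $M_1^{-2}I\le A^{-1}\le m_1^{-2}I$, so that $m=(m_2/M_1)^2$ and $M=(M_2/m_1)^2$ are the effective scalar bounds, and $A\sharp_\nu B=A^{1/2}X^\nu A^{1/2}$. Since both $A$ and $\Phi(A)$ are bounded below by $m_1^2$ they are invertible, and I may introduce the positive unital linear map
\[
\Psi(Y):=\Phi(A)^{-1/2}\,\Phi\!\left(A^{1/2}YA^{1/2}\right)\Phi(A)^{-1/2}.
\]
A direct computation gives $\Psi(X)=\Phi(A)^{-1/2}\Phi(B)\Phi(A)^{-1/2}$ and $\Psi(X^\nu)=\Phi(A)^{-1/2}\Phi(A\sharp_\nu B)\Phi(A)^{-1/2}$, hence $\Phi(A)\sharp_\nu\Phi(B)=\Phi(A)^{1/2}\Psi(X)^\nu\Phi(A)^{1/2}$. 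Conjugating by $\Phi(A)^{\pm 1/2}$, inequality \eqref{2199} becomes equivalent to the single operator estimate $K(m,M,\nu)\,\Psi(X)^\nu\le\Psi(X^\nu)$, where now $mI\le\Psi(X)\le MI$ because $\Psi$ is positive and unital.

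Next I would apply the Mond--Pe\v{c}ari\'c reduction to this estimate. Since $t\mapsto t^\nu$ is concave on $[m,M]$, there it dominates its chord $\ell(t):=m^\nu+\frac{M^\nu-m^\nu}{M-m}(t-m)$, so $\ell(X)\le X^\nu$ by functional calculus and therefore $\ell(\Psi(X))=\Psi(\ell(X))\le\Psi(X^\nu)$, using linearity and unitality of $\Psi$. It then suffices to establish the scalar inequality $K(m,M,\nu)\,s^\nu\le\ell(s)$ for all $s\in[m,M]$, that is $K(m,M,\nu)\le\min_{m\le s\le M}\ell(s)s^{-\nu}$; applying this scalar inequality to $\Psi(X)$, whose spectrum lies in $[m,M]$, via functional calculus then gives $K(m,M,\nu)\,\Psi(X)^\nu\le\ell(\Psi(X))\le\Psi(X^\nu)$, as needed.

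The only genuinely computational point is the evaluation of $\min_{m\le s\le M}\ell(s)s^{-\nu}$. Writing $\ell(s)=as+b$ with $a=\frac{M^\nu-m^\nu}{M-m}>0$ and $b=\frac{m^\nu M-M^\nu m}{M-m}>0$, one differentiates $as^{1-\nu}+bs^{-\nu}$ to find the unique stationary point $s_0=\frac{\nu b}{(1-\nu)a}$, which lies in $(m,M)$ since $\ell(s)s^{-\nu}$ equals $1$ at both endpoints and is strictly less than $1$ in between (strict concavity of $t^\nu$), so that the minimum is interior. Substituting $s_0$ and simplifying yields
\[
\min_{m\le s\le M}\ell(s)s^{-\nu}=\frac{\left(m^\nu M-M^\nu m\right)^{1-\nu}\left(M^\nu-m^\nu\right)^{\nu}}{(M-m)\,(1-\nu)^{1-\nu}\nu^{\nu}},
\]
and rewriting $mM^\nu-Mm^\nu=-\left(m^\nu M-M^\nu m\right)$ and collecting powers identifies the right-hand side with $K(m,M,\nu)$ as defined in the statement; the degenerate cases $\nu=0$ and $\nu=1$ are immediate, since then \eqref{2199} is an equality and $K(m,M,\nu)=1$. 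I expect this final matching of the extremal constant with the closed form for $K(m,M,\nu)$ to be the main obstacle --- it is purely a matter of careful algebra --- whereas the passage from the scalar inequality back to \eqref{2199} (the sandwich bound for $X$, the transformer map $\Psi$, and a single use of operator concavity) is entirely routine.
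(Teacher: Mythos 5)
Your proof is correct. Note first that the paper does not actually prove this statement: it is quoted as known background from Seo \cite{YSeo}, so there is no in-paper argument to compare against, and your write-up supplies a complete, self-contained proof by the Mond--Pe\v{c}ari\'c method. The reduction of \eqref{2199} to $K(m,M,\nu)\,\Psi(X)^{\nu}\le\Psi(X^{\nu})$ via the positive unital map $\Psi(Y)=\Phi(A)^{-1/2}\Phi\bigl(A^{1/2}YA^{1/2}\bigr)\Phi(A)^{-1/2}$ is sound; the chord estimate $\ell(\Psi(X))=\Psi(\ell(X))\le\Psi(X^{\nu})$ uses only linearity, unitality and operator monotone calculus; and I have checked the extremal computation: with $a=\frac{M^{\nu}-m^{\nu}}{M-m}$ and $b=\frac{m^{\nu}M-M^{\nu}m}{M-m}$ one indeed gets $\min_{m\le s\le M}\ell(s)s^{-\nu}=\frac{b}{1-\nu}\bigl(\frac{(1-\nu)a}{\nu b}\bigr)^{\nu}$, which after the sign rewrites $mM^{\nu}-Mm^{\nu}=-b(M-m)$ and $\nu-1=-(1-\nu)$ is exactly the stated $K(m,M,\nu)$. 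For comparison, Seo's original route --- which the present paper adapts in its proof of Theorem \ref{man12} --- keeps $\Phi(A)$ and $\Phi(B)$ separate: it proves the scalar reverse bound $\nu t+(1-\nu)\lambda_{0}\le\mu_{0}t^{\nu}$ on $[m,M]$, applies it to $A^{-1/2}BA^{-1/2}$ to obtain $\nu\Phi(B)+(1-\nu)\lambda_{0}\Phi(A)\le\mu_{0}\Phi(A\sharp_{\nu}B)$, and then invokes the operator AM--GM inequality for the pair $\lambda_{0}\Phi(A)$, $\Phi(B)$; your conjugation by $\Phi(A)^{\pm 1/2}$ absorbs that last operator-mean step into functional calculus for the single operator $\Psi(X)$, which is arguably cleaner, and the two extremal problems (your $\min\ell(s)s^{-\nu}$ versus Seo's optimal $(\lambda_{0},\mu_{0})$) are dual descriptions of the same constant. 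The only cosmetic point is that $K(m,M,1)$ is an indeterminate form as written, so your treatment of $\nu=1$ should be read as taking the limiting value $K=1$.
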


 In  this   section,   we   give  a   refinement   of     inequality   (\ref{2199}).  To    achieve   this,   we   need   the   following  theorem:

\begin{theorem}\cite{HZuoGShiMFujii}
Suppose that  $A,B\in {\mathbb B}({\mathscr H})$  are positive   and  $m,m^{'},M,M^{'}$ are positive  real  numbers
satisfying    either one of   the  following  conditions:
 \begin{enumerate}
\item
 $ 0<m \leq B \leq m^{'} <M^{'} \leq A \leq M $;
\item
$
0<m \leq A \leq m^{'} <M^{'} \leq B \leq M. $
\end{enumerate}
 Then   for  $ 0 \leq \nu \leq 1$
   \begin{eqnarray}\label{2155}
A \nabla _{v} B \geq  K^{r}(h) (A \sharp_{\nu} B),
\end{eqnarray}
where   $r=\min\{\nu, 1-\nu\},$   $h=\frac{M}{m}$   and   $h^{'}=\frac{M^{'}}{m^{'}}.$
   \end{theorem}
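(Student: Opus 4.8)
The plan is to reduce \eqref{2155} to a scalar inequality through functional calculus and then to invoke the refined Young inequality with the Kantorovich constant. I would treat condition (2), the argument under condition (1) being entirely symmetric after interchanging the roles of $A$ and $B$. Since
\[
A\sharp_{\nu}B=A^{\frac{1}{2}}\left(A^{-\frac{1}{2}}BA^{-\frac{1}{2}}\right)^{\nu}A^{\frac{1}{2}}
\quad\textrm{and}\quad
A\nabla_{\nu}B=A^{\frac{1}{2}}\left((1-\nu)I+\nu A^{-\frac{1}{2}}BA^{-\frac{1}{2}}\right)A^{\frac{1}{2}},
\]
setting $X=A^{-\frac{1}{2}}BA^{-\frac{1}{2}}$ and congruence-transforming \eqref{2155} on both sides by the invertible operator $A^{-\frac{1}{2}}$ shows that it is equivalent to the operator inequality $(1-\nu)I+\nu X\geq K^{r}(h)\,X^{\nu}$.

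Next I would locate the spectrum of $X$. Under condition (2) we have $B\geq M'I$ and $A\leq m'I$, so $X=A^{-\frac{1}{2}}BA^{-\frac{1}{2}}\geq M'A^{-1}\geq \frac{M'}{m'}I=h'I$; likewise $B\leq MI$ and $A\geq mI$ give $X\leq MA^{-1}\leq \frac{M}{m}I=hI$. Hence ${\rm sp}(X)\subseteq[h',h]\subset(1,+\infty)$. By the monotonicity principle for operator functions, the reduced operator inequality holds provided the scalar inequality $(1-\nu)+\nu t\geq K^{r}(h)\,t^{\nu}$ is valid for every $t\in{\rm sp}(X)$, and here I would appeal to the scalar refined Young inequality $(1-\nu)+\nu t\geq K^{r}(t)\,t^{\nu}$ $(t>0)$ of the source, together with property (iii) of the Kantorovich constant (monotone increasing on $[1,\infty)$).

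The step I expect to be the main obstacle is precisely the matching of the constant. Writing $\psi(t)=\frac{(1-\nu)+\nu t}{t^{\nu}}$, one checks that $\psi(1)=1$ and, by a short logarithmic-derivative computation, that $\psi$ is increasing on $(1,+\infty)$; consequently the largest uniform constant admissible over the spectral window is $\min_{t\in[h',h]}\psi(t)=\psi(h')$, attained at the inner endpoint $h'$ rather than at $h$. Thus the monotonicity argument as it stands controls the bound by a constant governed by $h'=\frac{M'}{m'}$, and to reach the stated constant $K^{r}(h)$ with $h=\frac{M}{m}$ one must additionally secure the pointwise estimate $(1-\nu)+\nu t\geq K^{r}(h)\,t^{\nu}$ at $t=h'$. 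Since $K^{r}(\cdot)$ is increasing and $h'<h$, this is the delicate point on which the whole argument hinges, and it is where I would concentrate the analysis, verifying carefully whether the controlling ratio in the source estimate of \cite{HZuoGShiMFujii} is indeed the global gap $\frac{M}{m}$ and not the inner gap $\frac{M'}{m'}$.
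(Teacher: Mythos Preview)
The paper does not prove this statement at all: it is quoted verbatim from \cite{HZuoGShiMFujii} and used later as input to Theorem~\ref{man12}. So there is no ``paper's proof'' to compare against, and your functional--calculus reduction to the scalar refined Young inequality is exactly the standard route by which such results are established (it is, in fact, the same mechanism the paper itself uses in Lemma~\ref{233}).

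Your hesitation about the constant is not a gap in \emph{your} argument but a genuine defect in the statement as printed. Your computation that $\psi(t)=\bigl((1-\nu)+\nu t\bigr)t^{-\nu}$ is increasing on $(1,\infty)$ is correct, so on the spectral window $[h',h]$ the sharp uniform lower bound is $\psi(h')$, and the Zuo--Shi--Fujii scalar inequality gives only $\psi(h')\geq K^{r}(h')$. The constant $K^{r}(h)$ with $h=M/m$ is simply not attainable and is in fact false: for $\nu=\tfrac12$ one has $\psi(t)=K^{1/2}(t)$ identically, so $\psi(h')=K^{1/2}(h')<K^{1/2}(h)$ whenever $h'<h$. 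Hence the displayed inequality \eqref{2155} should read $K^{r}(h')$, not $K^{r}(h)$; this is also what the paper itself obtains in the parallel Lemma~\ref{233}, where the minimisation over $[h',h]$ explicitly produces the factor $K^{r_{1}}(\sqrt{h'})$ governed by the inner ratio. In short: your plan proves the correct version of the theorem, and the obstacle you anticipated is a misprint in the quoted result, not a missing idea on your side. (A minor side remark: the paper's definition $A\nabla_{\nu}B=\nu A+(1-\nu)B$ differs from the one you used in your reduction; this does not affect the analysis because $r=\min\{\nu,1-\nu\}$ is symmetric, but you should align the conventions when writing up.)
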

 \begin{theorem}\label{man12}
 Let  $A,B\in {\mathbb B}({\mathscr H})$  such  that  $ 0< m^{2}_{1}\leq A \leq M^{2}_{1},$  $m^{2}_{2}\leq B \leq M^{2}_{2},$
  $m=\left( \frac {m_{2}}{M_{1}}\right)^{2}$     and
     $M=\left( \frac {M_{2}}{m_{1}}\right)^{2}.$   If $M_{1}< m_{2},$
   then   for   every
 positive   unital   linear  map  $\Phi $   and     $0\leq \nu \leq 1$,
 \begin{eqnarray}\label{221}
 \Phi(A) \sharp_{\nu} \Phi(B)
 \leq K(m,M,\nu)^{-1}  K(h)^{-r} \Phi(A \sharp_{\nu} B),
 \end{eqnarray}
 where
 $K(m,M,\nu)=\frac{mM^{\nu}-Mm^{\nu}}{(\nu-1)(M-m)}
 \left(  \frac{\nu-1}{\nu}  \frac{M^{\nu}-m^{\nu}}{mM^{\nu}-Mm^{\nu}}  \right)^{\nu},$
$ r=\min\{\nu,1-\nu\},$  $K(h)=\frac{(1+h)^{2}}{4h}$  and  $h=\frac{m^{2}_{2}}{M^{2}_{1}}.$
Similarly,   one    can  prove  the inequality  for     $M_{2}< m_{1}$    and  $h=\frac{M^{2}_{2}}{m^{2}_{1}}.$
 \end{theorem}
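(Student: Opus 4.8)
The plan is to obtain \eqref{221} by sharpening Seo's reverse Ando inequality \eqref{2199} with the Kantorovich-refined weighted arithmetic--geometric mean inequality \eqref{2155}. First I would unpack the hypothesis $M_{1}<m_{2}$: combined with $0<m_{1}^{2}\le A\le M_{1}^{2}$ and $m_{2}^{2}\le B\le M_{2}^{2}$ it yields the chain $0<m_{1}^{2}I\le A\le M_{1}^{2}I<m_{2}^{2}I\le B\le M_{2}^{2}I$, so $(A,B)$ satisfies condition~(2) in the hypothesis of \eqref{2155} with the identifications $m=m_{1}^{2}$, $m'=M_{1}^{2}$, $M'=m_{2}^{2}$, $M=M_{2}^{2}$, and the associated ratio is exactly $h=m_{2}^{2}/M_{1}^{2}>1$. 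Because $\Phi$ is positive and unital the same chain holds with $A,B$ replaced by $\Phi(A),\Phi(B)$, so \eqref{2155} applies verbatim to $(\Phi(A),\Phi(B))$ too and gives $\Phi(A)\sharp_{\nu}\Phi(B)\le K(h)^{-r}\big(\Phi(A)\nabla_{\nu}\Phi(B)\big)=K(h)^{-r}\Phi(A\nabla_{\nu}B)$, by linearity of $\Phi$ and of $\nabla_{\nu}$.

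Next I would re-prove \eqref{2199} in the form $K(m,M,\nu)\big(\Phi(A)\sharp_{\nu}\Phi(B)\big)\le\Phi(A\sharp_{\nu}B)$, following Seo but recording the extra room available here. Put $X:=A^{-1/2}BA^{-1/2}$, so that $hI\le X\le MI$ with $M=(M_{2}/m_{1})^{2}$; then apply the generalized Kantorovich operator inequality $\Psi(X^{\nu})\ge K(m,M,\nu)\,\Psi(X)^{\nu}$ --- valid for every positive unital linear map $\Psi$ when $mI\le X\le MI$ --- to the positive unital map $\Psi(T):=\Phi(A)^{-1/2}\Phi\big(A^{1/2}TA^{1/2}\big)\Phi(A)^{-1/2}$. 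Since $\Psi(X^{\nu})=\Phi(A)^{-1/2}\Phi(A\sharp_{\nu}B)\Phi(A)^{-1/2}$ and $\Psi(X)=\Phi(A)^{-1/2}\Phi(B)\Phi(A)^{-1/2}$, undoing the congruence by $\Phi(A)^{1/2}$ recovers \eqref{2199}. The heart of this argument is a step of weighted arithmetic--geometric type for $X$ (equivalently for $\Psi(X)$); the idea is to run that step through \eqref{2155} rather than through the bare AM--GM inequality --- legitimate precisely because $X\ge hI$ with $h>1$ --- so as to pick up the additional factor $K(h)^{r}$. Combining this sharpened form of \eqref{2199} with the estimate of the previous paragraph, and invoking Choi's inequality \eqref{23} where needed, should yield \eqref{221}. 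The case $M_{2}<m_{1}$ is symmetric: then $0<m_{2}^{2}I\le B\le M_{2}^{2}I<m_{1}^{2}I\le A\le M_{1}^{2}I$ is condition~(1) in the hypothesis of \eqref{2155}, the roles of $A$ and $B$ are interchanged, and since $K(t)=K(1/t)$ the constant may equally be written with $h=M_{2}^{2}/m_{1}^{2}$.

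The step I expect to be the main obstacle is exactly the insertion of \eqref{2155} into the proof of \eqref{2199}. When only the two-sided bound $mI\le X\le MI$ is used, the constant $K(m,M,\nu)$ is already best possible in $\Psi(X^{\nu})\ge K(m,M,\nu)\Psi(X)^{\nu}$, so the extra factor $K(h)^{r}$ cannot come for free: it must be extracted from the \emph{separate} two-sided bounds $m_{1}^{2}I\le A\le M_{1}^{2}I$ and $m_{2}^{2}I\le B\le M_{2}^{2}I$ (which carry strictly more information than any bound on $A^{-1/2}BA^{-1/2}$ alone) together with the strict inequality $h>1$. Making this precise --- in particular verifying that exactly $K(h)^{r}$ drops out, with $r=\min\{\nu,1-\nu\}$ and $K(h)=(1+h)^{2}/(4h)$, and checking that the congruence by $\Phi(A)^{\pm 1/2}$ is compatible with $\sharp_{\nu}$ and $\nabla_{\nu}$ --- is where the real work (and any difficulty) lies.
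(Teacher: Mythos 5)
Your plan stops short of a proof, and you say so yourself: the step you flag as ``the main obstacle'' --- extracting the factor $K(h)^{r}$ by inserting \eqref{2155} into the proof of \eqref{2199} --- is exactly the step you never carry out, and the insertion point you propose (refining the generalized Kantorovich inequality $\Psi(X^{\nu})\ge K(m,M,\nu)\Psi(X)^{\nu}$, which depends only on $mI\le X\le MI$) is, as you correctly suspect, blocked by the optimality of $K(m,M,\nu)$ there. Your first paragraph's estimate $\Phi(A)\sharp_{\nu}\Phi(B)\le K(h)^{-r}\,\Phi(A\nabla_{\nu}B)$ is true but does not chain with anything: there is no bound of the form $\Phi(A\nabla_{\nu}B)\le K(m,M,\nu)^{-1}\Phi(A\sharp_{\nu}B)$ to append to it. So there is a genuine gap at the crux.

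The device the paper uses is to interpolate a \emph{linear} quantity between the two sides of \eqref{221}. With $\mu_{0}=\frac{\nu(M-m)}{M^{\nu}-m^{\nu}}$ and $\lambda_{0}=\frac{\nu}{1-\nu}\,\frac{M^{1-\nu}-m^{1-\nu}}{m^{-\nu}-M^{-\nu}}$, the scalar secant bound $\nu t+(1-\nu)\lambda_{0}\le\mu_{0}t^{\nu}$ on $[m,M]$, applied to $A^{-1/2}BA^{-1/2}$ and pushed through $\Phi$, gives $\nu\Phi(B)+(1-\nu)\lambda_{0}\Phi(A)\le\mu_{0}\,\Phi(A\sharp_{\nu}B)$; this is \eqref{223}, and it is where all of $K(m,M,\nu)$ lives, since $\mu_{0}\lambda_{0}^{\nu-1}=K(m,M,\nu)^{-1}$. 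The refinement \eqref{2155} is then applied not to $(\Phi(A),\Phi(B))$ and not inside the operator Kantorovich inequality, but to the \emph{scaled} pair $(\lambda_{0}\Phi(A),\Phi(B))$, whose $\nu$-weighted arithmetic mean is precisely the linear quantity above and whose $\nu$-weighted geometric mean is $\lambda_{0}^{1-\nu}\,\Phi(A)\sharp_{\nu}\Phi(B)$; this is \eqref{224}, and chaining it with \eqref{223} gives the claim. In Seo's argument that last comparison is the bare AM--GM step, entirely disjoint from the step where $K(m,M,\nu)$ is sharp, which dissolves the obstruction you identified. Your diagnosis that the extra factor must be paid for by the separate two-sided bounds on $A$ and $B$ is correct; what is missing from your write-up is the identification of the weighted pair $(\lambda_{0}\Phi(A),\Phi(B))$ as the place to spend that information (and, relatedly, the separation hypothesis one must check for that pair, which involves $\lambda_{0}$ and is glossed over even in the paper).
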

 \begin{proof}
 For   $t\in [m,M].$
 We put
 $F(t)=\nu t^{1-\nu}+(1-\nu) \lambda_{0}  t^{-\nu},$
   where
 $$\mu_{0}=\frac{\nu(M-m)}{M^{\nu}-m^{\nu}}\quad \quad  \lambda_{0}=\frac{\nu}{1-\nu} \frac{M^{1-\nu}-m^{1-\nu}}
 {m^{-\nu}-M^{-\nu}}.$$
 Easy    computation   shows    that
  $\max_{t\in[m,M]} F(t)=F(M)=F(m)$   and  $F(M)=F(m)=\mu_{0}.$
  Hence
  \begin{align}\label{230}
  \nu  t^{1-\nu}+(1-\nu) \lambda_{0} t^{-\nu}  \leq  \mu_{0}.
  \end{align}
Using  the  fact  that
  $ 0< m^{2}_{1}\leq A \leq M^{2}_{1}$  and   $m^{2}_{2}\leq B \leq M^{2}_{2},$    we  get
    $mI \leq C= A^{-\frac{1}{2}}B  A^{-\frac{1}{2}}\leq MI.$    Considering    inequality  (\ref{230})  with  $ C= A^{-\frac{1}{2}}B  A^{-\frac{1}{2}},$
 we    obtain
 \begin{eqnarray*}
\nu C+(1-\nu) \lambda_{0}  I \leq \mu_{0} C^{\nu}.
\end{eqnarray*}
Multiplying   both  sides  of  the  latter  inequality  by   $A^{\frac{1}{2}},$   we   have
\begin{eqnarray}\label{223}
\nu \Phi (B)+(1-\nu) \lambda_{0} \Phi (A) \leq \mu_{0} \Phi (A\sharp_{\nu} B).
\end{eqnarray}
Using  (\ref{2155})    for  two  operators  $ \lambda_{0} \Phi( A)$  and  $\Phi (B)$    yields   that
\begin{eqnarray}\label{224}
\lambda^{1-\nu}_{0} \Phi(A) \sharp_{\nu} \Phi(B)
 \leq  K(h,2)^{-r}  ( \nu \Phi(B)+ (1-\nu)\lambda_{0}\Phi(A)).
 \end{eqnarray}
From  \eqref{223}  and  \eqref{224},  we  obtain  inequality  \eqref{221}.
 \end{proof}

\begin{remark}
Note  that   the   right  side   of    inequality   \eqref{221}  is   a  better   bound   than   inequality   \eqref{2199},  since
the  Kantorovich  constant  $K(h)$  is  increasing  on  the  interval  $(1,+\infty).$
\end{remark}

\begin{remark}
If  we  put  $\nu=\frac{1}{2}$   in  Theorem \ref{man12},   then  we   obtain    a  refinement  of   (\ref{2222}),
 since
the  Kantorovich  constant  $K(h)$  is  increasing  on  the  interval  $(1,+\infty).$
\end{remark}

\textbf{Acknowledgement.}
The  first  author   would  like  to  thank    the  Lorestan  University  and
the second author would like to thank the Tusi Mathematical Research Group (TMRG).



\bibliographystyle{amsplain}

 \end{document}